\newtheorem{definition}{Definition}%[chapter]
\newtheorem{theorem}{Theorem}%[chapter]
\newtheorem{corollary}{Corollary}%[chapter]
\newtheorem{lemma}{Lemma}%[chapter]
\newtheorem{remark}{Remark}%[chapter]
\newenvironment{proof}{\noindent {\bf Proof:}}
\begin{document}
\title{An optimal control problem for the
Navier-Stokes-$\alpha$ system }
\author{Exequiel Mallea-Zepeda$^1$, Elva Ortega-Torres$^2$, \'Elder J. Villamizar-Roa$^3$}
\date{\small$^1$\it Departamento de Matem\'atica, Universidad de Tarapac\'a, Arica, Chile\\
$^{2}$\it Departamento de Matem\'aticas, Universidad Cat\'olica
del Norte, Antofagasta, Chile\\
$^3$\it Escuela de Matem\'aticas, Universidad Industrial de Santander, Bucaramanga, Colombia}
\maketitle

\footnotetext{$^1$  E-mail:{\tt emallea@uta.cl}}
\footnotetext{$^2$E-mail: {\tt eortega@ucn.cl}}
\footnotetext{$^3$E-mail: {\tt jvillami@uis.edu.co}}
%\maketitle
\date{}

\begin{abstract}
In this paper we study a distributed optimal control problem for a
three-dimensional Navier-Stokes-$\alpha$ model.  We prove the solvability of the optimal control problem,
and derive first-order optimality conditions by using a Lagrange multipliers Theorem. Finally, considering a velocity tracking control problem for the
three-dimensional Navier-Stokes-$\alpha$ model, we analyze the relation of its optimality system to the corresponding one associated  to the Navier-Stokes model by proving a convergence theorem, which establishes that, as the length scale $\alpha$ goes to zero, the optimality system of the three-dimensional Navier-Stokes-$\alpha$ model converges to the optimality system associated with the velocity tracking control problem of the Navier-Stokes equations.

\noindent\textbf{Keywords:} Optimal control problem, $\alpha$-Navier-Stokes model,
optimality conditions.

\noindent\textbf{AMS Subject Classifications (2010):} 49J20, 76D55, 76D05, 35Q30.
%Mathematics Subject Classification (2010) 49J20 · 76D55 · 35Q30

\end{abstract}

\section{Introduction}
The Navier-Stokes-$\alpha$ model (NS-$\alpha$), also known as Lagrange averaged Navier-Stokes-$\alpha$ model, corresponds to a regularization of the Navier-Stokes equations using the Helmholtz operator. This model, introduced by S. Chen, C. Foias, D.D. Holm, E. Oslon, E.S.
Titi, and S. Wynne in \cite{chen1}, modifies the nonlinearity in the Navier-Stokes system  to control the cascading of turbulence at scales smaller than a certain length,
but without introducing any extra dissipation (c.f. \cite{chen1,lans2, chen2, chen3, holm, lans4}).  This model can be deduced as follows: We consider the Navier-Stokes equations which are given by 
\begin{equation}
\left\{
\begin{array}{rcl}%
v_t-\nu \Delta v+(v\cdot\nabla)v+\nabla p&=&f \mbox{ in } Q, \\
\mbox{div}\, v &=&0 \mbox{ in } Q,\\
v&=&0 \mbox{ on } \Gamma \times (0,T),\\
v(x,0)&=& v_0(x) \mbox{ in } \Omega,
\end{array}
\right.  \label{eq0a}
\end{equation}
where $v(x,t)$ and $p(x,t)$ are the unknown, representing respectively, the
 velocity and the pressure, in each point
of $Q=\Omega  \times (0,T), 0 < T <\infty,$ $\Omega$ is a domain of $\mathbb{R}^n,$ $n=2,3,$ with boundary $\Gamma.$ On the
right-hand side, $f$ is a fixed external force, and $v_0$ is a given
initial velocity field. The positive constant $\nu$ represents the
kinematic viscosity of the fluid.
 Then, by using the identity $(v\cdot\nabla)v=-v\times(\nabla\times v)+\frac{1}{2}\nabla(v\cdot v),$ the momentum equation (\ref{eq0a})$_1$ is rewritten as
\begin{equation}
\left\{
\begin{array}{rcl}%
v_t-\nu \Delta v-v\times(\nabla\times v)+\nabla p^{\prime}&=&f \mbox{ in } Q, \\
\mbox{div}\,v &=&0 \mbox{ in } Q,
\end{array}
\right.  \label{eq0ab}
\end{equation}
with $p^{\prime}=p+\frac{1}{2}v\cdot v.$ Therefore, applying the so-called Leray regularization in the nonlinear term of (\ref{eq0ab})$_1$ we have
\begin{equation}
\left\{
\begin{array}{rcl}%
v_t-\nu \Delta v-u\times(\nabla\times v)+\nabla p^{\prime}&=&f \mbox{ in } Q, \\
\mbox{div}\,v &=&0 \mbox{ in } Q,
\end{array}
\right.  \label{eq0ac}
\end{equation}
where $u$ is defined as the solution of
\begin{equation}
\left\{
\begin{array}{rcl}%
u-\alpha^2\Delta u+\nabla \pi&=&v, \\
\mbox{div}\, u &=&0,\\
 u&=&0 \mbox{ on } \Gamma,
\end{array}
\right.  \label{eq0ad}
\end{equation}
with $\alpha^2>0$ being the regularization parameter. One may rewrite (\ref{eq0ac}) in terms of $u$ by replacing $v$ in (\ref{eq0ac}), obtaining the system
\begin{equation}
\left\{
\begin{array}{rcl}
\partial_t(u-\alpha^2 \Delta u)-\nu \Delta(u-\alpha^2 \Delta u)
-u\times (\nabla\times (u-\alpha^2 \Delta u)) + \nabla p''&=& f \mbox{ in } Q, \\
\mbox{div}\, u&=&0 \mbox{ in } Q,
\end{array}
\right.  \label{eq1z}
\end{equation}
where $p''=p^\prime+\partial_t\pi+\Delta \pi$ (here we have used that $\nabla\times\nabla \pi=0$). Since system (\ref{eq1z}) if of fourth order, it needs to be completed with an extra boundary condition for $\Delta u.$ We could consider the homogeneous Dirichlet boundary conditions $u=0$ and $\Delta u=0$ on $\Gamma \times (0,T);$ however, these assumptions are incompatible due the incompressibility condition (see \cite{Ladyzhenskaya,MS1}). Therefore, it is convenient to complete (\ref{eq1z}) with the boundary conditions $u=Au=0$ on $\Gamma \times (0,T),$ where $A$ denotes the Stokes operator. Equations (\ref{eq0ac})-(\ref{eq0ad}) constitutes the so-called Navier-Stokes-$\alpha$ model. Observe that, considering formally $\alpha=0$, we recover the Navier-Stokes system.\\

The main reason of studying the NS-$\alpha$ models comes from the need of approximating problems relating to turbulent flows, because this kind of models preserves properties of transport for circulation and vorticity dynamics of the Navier-Stokes equations. In addition,  the interest of using the NS-$\alpha$ models is justified due to the high-computational cost that the
Navier-Stokes model requires \cite{lans2}. For a complete description of the physical significance of the NS-$\alpha$ models, namely in turbulence theory, and their developments, we refer \cite{chen1,lans2,chen2, chen3,holm,lans4,lans7,lans8} and references therein.\\

From a mathematical point of view, several results devoted to the analysis of NS-$\alpha$ models have been developed in the last years, see for
instance \cite{lans2, lans4, lans7, lans8,real1,marquez-duran, lans5, Cheskidov, lans3,coutand, Fo-Ho-Ti,lans9} and references
therein. These results are related to the
well-posedness, long time behavior, decay rates of the velocity and
the vorticity, the connection between the solutions of the
NS-$\alpha$ model and the 3D Navier-Stokes system, the existence and uniqueness of solutions for
stochastic versions, and the existence and convergence of trajectory attractors, among others. In particular, unlike the 3D Navier-Stokes
equations, for NS-$\alpha$ model, the existence and uniqueness of
weak solutions is known (see for instance \cite{lans7}). In control problems this point
is important because it guarantees that
the reaction of the flow produced by the action of a control is
unique.\\

In this paper we are interested in an optimal control problem for
the NS-$\alpha$ model (\ref{eq1z}). We consider a distributed control acting as a external force; we also allow a final observation in the control; in this sense,
we say that it is a distributed optimal control problem with final observation. More precisely, we wish to
minimize the functional
\[J(u,f)=\frac{\gamma_u}{2}\int_0^T\Vert u(t)-u_d(t)\|_{D(A)}^2
dt+\frac{\gamma_T}{2}\int_\Omega\vert u(x,T)-u_T(x)\vert^2dx
+\frac{\gamma_f}{2}\int_0^T \|f(t)\|_{2}^2 dt,\] where the velocity
field is subject to verify system (\ref{eq1z}), and the field $f$ now represents a distributed type control.
The fields $u_d, u_T$ are given and denote the desired states, and the parameters
$\gamma_u,\gamma_T,\gamma_f\ge0$ stand the cost coefficients for the states and
control. The exact mathematical formulation will be given in Section
3. We will prove the solvability of the optimal control problem and
state the first-order optimality conditions. By using a Lagrange
multipliers theorem, we derive an optimality system. To the best of our knowledge, 
the analysis of optimal control problems where the state
variable satisfies the 3D NS-$\alpha$ model (\ref{eq1z}) has not been considered.
However, from the point of view of the controllability theory, in \cite{Cara} the authors deals with the distributed and boundary controllability for the NS-$\alpha$ model and prove that the Leray-$\alpha$ equations are locally null controllable, with controls bounded independently of $\alpha.$\\

In the context of nonstationary Navier-Stokes equations, there are many
results available in the literature concerned with the study of
optimal control problems (see \cite{fursikov} and references
therein). In particular, for the
2D-Navier-Stokes system, necessary conditions of optimality can be found in \cite{abergel, gunzburger1,
gunzburger2, hinze, Mallea}. Necessary conditions of optimality for control problems related to 3D
Navier-Stokes system were obtained in \cite{casas, Casas2}. In \cite{Casas2}, the author studied a velocity tracking control problem associated to the non-stationary Navier-Stokes equations for three-dimensional flows. In the classical tracking control problem, the cost functional involves the $L^2$-norm of 
$u-u_d,$ but unlike the $2D$ case, the $3D$ version is much more complicated due to the lack of uniqueness of weak solutions, or the existence of strong solutions
(which is an open question). Therefore, instead of considering the $L^2$-norm of the cost functional, in \cite{Casas2} the authors considered
\begin{equation}\label{fo1}
J_0(u,f):=\displaystyle\frac{\gamma_u}{2}\int_0^T\|u(t)-u_d(t)\|^8_{L^4}dt+\displaystyle\frac{\gamma_T}{2}\int_\Omega|u(x,T)-u_T(x)|^2dx
+\displaystyle\frac{\gamma_f}{2}\int_0^T\|f(t)\|^2dt.
\end{equation}
Then, it is possible to minimize $J_0$ in a class of functions which $(u,f)$ satisfies the Navier-Stokes system (\ref{eq0a}). Indeed, if $u$ is a weak solution of (\ref{eq0a}) such that $J_0(u,f)<\infty,$ then $u$ is a strong solution. With this formulation, the authors in \cite{Casas2} proved that there exists an optimal solution and analyzed first and second optimality conditions (see, also \cite{casas}). In this paper we also are interested in to analyze the convergence of the optimality system of the optimal control problem, associated to the N-S-$\alpha$ system as $\alpha\rightarrow 0^+,$ and relate the limit to the corresponding optimality system of the optimal control problem with state equations (\ref{eq0a}) and cost functional (\ref{fo1}). 
In \cite{lans7} the authors investigated the convergence, as $\alpha\rightarrow 0^+,$ of the solutions of the Navier-Stokes-$\alpha$ equations to a weak solution of Navier-Stokes system (\ref{eq0a}). Therefore, inspired in \cite{lans7}, we will analyze the convergence, as $\alpha\rightarrow 0^+,$ of the adjoint system associated to the optimal control problem for N-S-$\alpha$ model, and its relation with the corresponding adjoint system in the case of Navier-Stokes equations. This fact, gives a way to analyze optimal control problems associated with the Navier-Stokes equations, via optimal control problems with state equations given by the Navier-Stokes-$\alpha$ model.\\

The paper is organized as follows. In Section 2, we establish the
notation to be used and recall some preliminary results for the
NS-$\alpha$ model. In Section 3, we are setting the precise optimal
control problem and prove the existence of optimal solutions. In
Section 4, we derive the first-order optimality conditions, and by
using a Lagrange multipliers theorem in Banach spaces, we derive an optimality system. Finally, in Section 5, we analyze the relationship between the optimality systems of NS-$\alpha$ and Navier-Stokes models.
\section{Preliminaries}
Let $\Omega$ be a bounded domain in $\mathbb{R}^3$ with boundary
$\Gamma$ of class $C^2$. We denote by $\mathcal{D}(\Omega)$ the space of functions of class
$C^{\infty}(\Omega)$ with compact support on $\Omega.$ Throughout this paper we,
use standard notations for Lebesgue and Sobolev spaces. In
particular, the $L^2(\Omega)$-norm and the $L^2(\Omega)$-inner
product, will be represented by $\Vert \cdot\Vert$ and
$(\cdot,\cdot),$ respectively. %We consider the space $H_0^1(\Omega)$ with inner product given by $(u,v)+\alpha^2(\nabla u,\nabla v),$ for $u,v\in H_0^1(\Omega),$ where its associated norm $\Vert \cdot \Vert_{H^1}$ is in fact equivalent to the gradient norm, thanks the Poincaré inequality. 
We consider the solenoidal Banach
spaces $H$ and $V$ defined, respectively, as the closure in
$(L^2(\Omega))^3$ and $(H^1(\Omega))^3$ of
$
\mathcal{V}=\{u \in (\mathcal{D}(\Omega))^3 : \mbox{div}\ u=0
\mbox{ in } \Omega\}.
$
The
norm and the inner product in $V$ will be denoted by $\Vert
u\Vert_V$ and $(\nabla u, \nabla v),$ respectively. Throughout this
paper, if $X$ is a Banach space with topological dual space $X'$, the duality
pairing between $X'$ and $X$ will be denoted by $\langle\cdot,
\cdot\rangle_{X',X}.$ To simplify the notation, we will
use the same notation for vectorial valued and scalar valued spaces.
For $X$ Banach space, $\|\cdot\|_X$ denotes its norm and
$L^p(0,T;X)$ denotes the standard space of functions from $[0,T]$ to
$X,$ endowed with the norm
\[\|u\|_{L^p(0,T;X)}= \bigg( \int_0^T\|u\|_X^p dt\bigg)^{1/p}, \ 1\leq p <\infty,\qquad
 \|u\|_{L^\infty(0,T;X)}= \sup_{t\in (0,T)} \|u(t)\|_X.\]
In the sequel we will identify the spaces $L^p(0,T;X):= L^p(X)$ and
$L^p(0,T;L^p(\Omega)) :=L^p(Q)$. Let us consider the Leray projector $P:L^2(\Omega) \rightarrow H$, and denote
by $A:= - P\Delta$ the Stokes operator with domain
$D(A)=H^2(\Omega)\cap V$. It is well-known that $A$ is a self-adjoint
positive operator with compact inverse. Since $\Gamma$ is of class
$C^2$, the norms $\|A u\|$ and $\|u\|_{H^2}$ are equivalent.
Also, for $u\in D(A)$ and $v\in L^2(\Omega),$ and considering the space
$H^{-1}(\Omega)\equiv (H^1_0(\Omega))',$ we define
$
\langle(u\cdot\nabla)v,w\rangle_{H^{-1},H^1_0}=\sum_{i,j=1}^3\langle\partial_i
v_j,u_iw_j\rangle_{H^{-1},H^1_0},\ \forall w\in H^1_0(\Omega).
$
In particular, if $v\in H^1(\Omega),$ the duality product
$\langle(u\cdot\nabla)v,w\rangle_{H^{-1},H^1_0}$ coincides with the
definition of
$$
((u\cdot\nabla)v,w)=\sum_{i,j=1}^3\int_\Omega(u_i\partial_i
v_j)w_jdx.
$$
Let us denote by $(\nabla u)^*$ the transpose of $\nabla u.$ Thus,
if $u \in D(A)$ then $(\nabla u)^* \in H^1(\Omega) \subset
L^6(\Omega)$. Consequently, for $v\in L^2(\Omega)$ we have that
$(\nabla u)^*\cdot v \in L^{3/2}(\Omega)\subset H^{-1}(\Omega)$ and
$$
\langle(\nabla u)^*\cdot
v,w\rangle_{H^{-1},H^1_0}=\sum_{i,j=1}^3\int_\Omega(\partial_ju_i)
v_iw_jdx,\ \forall w\in H^1_0(\Omega).
$$
One can check that for $u,w \in D(A), v\in L^2(\Omega),$ the
following equality holds
\begin{equation}\label{eq7}
\langle(u\cdot \nabla) v, w\rangle_{H^{-1},H^1_0}= -\langle(\nabla
w)^*\cdot v,u\rangle_{H^{-1},H^1_0}.
\end{equation}
We consider the nonlinear operator  $ B: D(A)\times D(A)\rightarrow
D(A)'$ defined by
\begin{equation}\label{eq8}
\langle B(u,v),w\rangle_{D(A)',D(A)}=\langle(u\cdot \nabla)(v-\alpha^2
\Delta v), w\rangle_{V',V}+\langle(\nabla u)^*\cdot (v-\alpha^2\Delta
v),w\rangle_{V',V}.
\end{equation}
Thus, from (\ref{eq7}) we have
\begin{equation}\label{eq9}
\langle B(u,v),u \rangle_{D(A)',D(A)}= 0, \ \forall \, u,v \in D(A).
\end{equation}
Also, we get
\begin{eqnarray*}
|\langle B(u,v),w\rangle_{D(A)',D(A)}|&\leq &C\|u\|\|\nabla v\|\|A
w\|+ C\,\alpha^2 (\|u\|_{L^6}\|\nabla w\|_{L^3}+
\|\nabla u\|\|w\|_{L^\infty})\|\Delta v\|\\
&\leq & C\|\nabla u\|\|A v\|\|A w\|+C\alpha^2 \|\nabla u\|\|A w\|\|Av\|\leq  C\|\nabla u\|\|A v\|\|A w\|.
\end{eqnarray*}
Therefore,
\begin{equation}\label{eq10}
\|B(u,v)\|_{D(A)'} \leq C\,\|\nabla u\|\|A v\|\leq C\Vert
u\Vert_V\Vert v\Vert_{D(A)}, \quad \forall \, u, v \in D(A),
\end{equation}
and thus, for all $u, v \in L^\infty(V)\cap L^2(D(A))$ it holds
$B(u,v) \in  L^2(D(A)')$. 
Denoting by $ \Delta_\alpha= I-\alpha^2 \Delta$, one gets
\[\Delta_\alpha u \in L^\infty(V') \cap L^2(H)\quad \mbox{ and }
\quad \Delta_\alpha Au \in L^2(D(A)') \quad \forall u\in
L^2(D(A))\cap L^\infty(V).\] With the above notations, system
(\ref{eq1z}) can be rewritten as
\begin{equation}
\left\{
\begin{array}{rcl}%
\Delta_\alpha u_t +\nu \Delta_\alpha Au+ B(u,u) + \nabla p
&=& f \mbox{ in } Q,\\
{\rm div}\,u&=&0  \mbox{ in } Q, \\
u&=&0, \quad Au=0 \mbox{ on } \Gamma \times (0,T),\\
u(x,0)&=& u_0(x) \mbox{ in } \Omega.
\end{array}
\right.  \label{eq11}
\end{equation}
%For $f\in L^2(D(A)')$ and the regularity $L^2(D(A)')$ of the terms
%$B(u,u)$ and $nu \Delta_\alpha Au,$ the equation
Now we are in position to establish the definition of weak solution of problem (\ref{eq1z}) (equivalently (\ref{eq11})).
\begin{definition}(\textit{Weak solution})
Let $f\in L^2(Q)$ and $u_0\in V.$ We say that the field $u$ is a \textit{weak solution} of the
problem (\ref{eq11}) if
\begin{equation}\label{space_W}
u\in\mathbb{W}:=\{u\in L^2(D(A))\cap L^\infty(V)\,:\, u_t\in L^2(H)\}
\end{equation}
and satisfies the following variational formulation
\begin{equation}
\left\{
\begin{array}{rcl}%
(u_t, w) +\alpha^2(\nabla u_t, \nabla w)
+\nu (Au,w+\alpha^2  Aw)
+\langle B(u,u), w\rangle_{D(A)',D(A)} 
&=&(f, w), \quad \forall \, w\in D(A),\\
u(x,0)&=& u_0(x) \mbox{ in } \Omega,
\end{array}
\right.  \label{eq16}
\end{equation}
or equivalently,
\begin{equation}
\left\{
\begin{array}{rcl}%
\Delta_\alpha u_t+\nu \Delta_\alpha Au+ B(u,u)
&=& f \mbox{ in } D(A)', \\
A u&=&0 \mbox{ on } \Gamma \times (0,T),\\
u(x,0)&=& u_0(x) \mbox{ in } \Omega.
\end{array}
\right.  \label{eq17}
\end{equation}
\end{definition}
We recall the following
compactness result:
%\begin{lemma} (\cite{lions})\label{lema1}
%Let $B_0, B$ and $B_1$  reflexives Banach spaces with $B_0\subset
%B\subset B_1$ continuously and  $B_0\hookrightarrow B$ compact. For
%$p_0, p_1 \in (1,\infty)$ and $T <\infty$ it is defined the Banach
%space
%\begin{equation}\label{eq5}
%W=\{ u \in L^{p_0} (B_0), \ u_t\in L^{p_1} (B_1)\}.
%\end{equation}
%Then, $W \hookrightarrow L^{p_0}(B)$ compactly and $W \subset C([0,T];B)$.\\
%\end{lemma}
\begin{lemma} (\cite{Simon})\label{lema1}
Let $B_0, B$ and $B_1$  be Banach spaces with $B_0\hookrightarrow B\hookrightarrow
B_1$ continuously and  $B_0\hookrightarrow B$ compact. For $1\leq p\leq
\infty$ and $T <\infty$ consider the Banach space
\begin{equation}\label{eq5}
W=\{ u \in L^{p} (0,T;B_0)\,:\, \ u_t\in L^{1} (0,T;B_1)\}.
\end{equation}
Then $W \hookrightarrow L^p(0,T;B)$ compactly.
\end{lemma}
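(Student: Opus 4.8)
\noindent This is the classical Aubin--Lions--Simon compactness result, so the plan is to give a self-contained argument resting on two ingredients: an Ehrling-type interpolation inequality and a vector-valued Riesz--Fr\'echet--Kolmogorov criterion. First I would establish the Ehrling inequality: since $B_0\hookrightarrow B$ is compact and $B\hookrightarrow B_1$ continuous, for every $\eta>0$ there is a constant $C_\eta$ with
\begin{equation*}
\|v\|_B \le \eta\,\|v\|_{B_0} + C_\eta\,\|v\|_{B_1}, \qquad \forall v\in B_0.
\end{equation*}
I would prove this by contradiction: if it failed for some $\eta_0$, one extracts a sequence $v_n$ with $\|v_n\|_B=1$, $\|v_n\|_{B_0}$ bounded but $\|v_n\|_{B_1}\to 0$; the compact embedding yields a $B$-convergent subsequence whose limit has $B$-norm $1$, while continuity of $B\hookrightarrow B_1$ forces the limit to vanish, a contradiction.

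Second, I would invoke the characterization of relative compactness of a bounded family $\mathcal F\subset L^p(0,T;B)$ (for $1\le p<\infty$): it suffices that the time translates satisfy $\sup_{u\in\mathcal F}\|\tau_h u-u\|_{L^p(0,T-h;B)}\to 0$ as $h\to 0^+$, and that for all $0\le t_1<t_2\le T$ the set of averages $\{\int_{t_1}^{t_2}u(s)\,ds : u\in\mathcal F\}$ is relatively compact in $B$. Taking $\mathcal F$ to be a bounded subset of $W$, the averages are bounded in $B_0$ (because $W$ is bounded in $L^p(0,T;B_0)\subset L^1(0,T;B_0)$), hence precompact in $B$ by the compact embedding $B_0\hookrightarrow\hookrightarrow B$; this settles the second condition directly.

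Third --- and this is where the only genuine subtlety lies --- I would control the time translates. Writing $u(t+h)-u(t)=\int_t^{t+h}u_t(s)\,ds$ in $B_1$ and applying Young's convolution inequality with the kernel $\mathbf{1}_{[0,h]}$ gives
\begin{equation*}
\|\tau_h u-u\|_{L^p(0,T-h;B_1)} \le h^{1/p}\,\|u_t\|_{L^1(0,T;B_1)},
\end{equation*}
so the translates tend to $0$ in $L^p(0,T;B_1)$ uniformly over $\mathcal F$, despite $u_t$ being only $L^1$ in time. Combining this with the Ehrling inequality and the crude bound $\|\tau_h u-u\|_{L^p(B_0)}\le 2\,\|u\|_{L^p(B_0)}$ yields
\begin{equation*}
\|\tau_h u-u\|_{L^p(0,T-h;B)} \le 2\eta\,\|u\|_{L^p(B_0)} + C_\eta\,h^{1/p}\,\|u_t\|_{L^1(B_1)},
\end{equation*}
and choosing $\eta$ small first and then $h$ small makes the right-hand side arbitrarily small uniformly in $\mathcal F$, giving the first compactness condition.

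The main obstacle, as the above indicates, is precisely the mismatch between the $L^p$ integrability in $B_0$ and the mere $L^1$ integrability of $u_t$ in $B_1$; the Young-inequality estimate with rate $h^{1/p}$ is what overcomes it, and it is the reason the hypothesis $u_t\in L^1(B_1)$ already suffices. Finally, for the endpoint $p=\infty$ the translate estimate degenerates, so I would treat this case separately: $u_t\in L^1(0,T;B_1)$ makes $\mathcal F$ equicontinuous as a family of $B_1$-valued functions, which together with the Ehrling inequality and the Arzel\`a--Ascoli theorem gives relative compactness in $C([0,T];B)\hookrightarrow L^\infty(0,T;B)$.
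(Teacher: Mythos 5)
Your argument for $1\le p<\infty$ is correct, and it is essentially the argument of the cited source: the paper offers no proof of this lemma at all (it is imported from Simon's article), and Simon's own proof rests on exactly your two ingredients, the Ehrling-type inequality $\|v\|_B\le\eta\|v\|_{B_0}+C_\eta\|v\|_{B_1}$ and the vector-valued Kolmogorov--Riesz criterion (uniform smallness of translates in $L^p(0,T-h;B)$ plus relative compactness in $B$ of the averages $\int_{t_1}^{t_2}u(s)\,ds$). You also handle the one delicate point correctly: since $u\in L^p(0,T;B_0)\hookrightarrow L^1(0,T;B_1)$ and $u_t\in L^1(0,T;B_1)$, the representative of $u$ is $B_1$-absolutely continuous, Young's inequality gives $\|\tau_hu-u\|_{L^p(0,T-h;B_1)}\le h^{1/p}\|u_t\|_{L^1(0,T;B_1)}$, and choosing $\eta$ before $h$ in the splitting $\|\tau_hu-u\|_{L^p(B)}\le 2\eta\|u\|_{L^p(B_0)}+C_\eta h^{1/p}\|u_t\|_{L^1(B_1)}$ yields uniformity over the bounded family even though $u_t$ is merely $L^1$ in time.

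The endpoint $p=\infty$, however, contains a genuine gap, and it cannot be repaired because the statement as quoted is actually false there. A bounded set $\mathcal F\subset W$ only gives $\sup_{u\in\mathcal F}\|u_t\|_{L^1(0,T;B_1)}\le M$; this makes each individual $u$ absolutely continuous but does \emph{not} make the family equicontinuous, since the $L^1$ mass of $u_t$ can concentrate at a point --- so the Arzel\`a--Ascoli step in your last paragraph does not apply. Concretely, take $B_0=B=B_1=\mathbb{R}$ (the identity on a finite-dimensional space is compact), or for a nondegenerate triple take $B_0=V$, $B=H$, $B_1=V'$ and $u_n(t)=r_n(t)\phi$ with $\phi$ fixed and $r_n(t)=\min\{1,\max\{0,n(t-T/2)\}\}$: then $\|u_n\|_{L^\infty(B_0)}=\|\phi\|_{B_0}$ and $\|u_n'\|_{L^1(B_1)}\le\|\phi\|_{B_1}$ uniformly, yet at $t=T/2+1/m$ one has $\|u_n-u_m\|_{L^\infty(0,T;B)}\ge\frac12\|\phi\|_B$ whenever $m\ge 2n$, so no subsequence converges in $L^\infty(0,T;B)$. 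This is precisely why Simon's actual theorem treats the endpoint differently: for $p=\infty$ one needs $u_t$ bounded in $L^r(0,T;B_1)$ with $r>1$ to conclude relative compactness in $C([0,T];B)$. The misquotation (a common one) is harmless for the present paper, which applies the lemma only with $p=2$, namely $\mathbb{W}\hookrightarrow L^2(V)$ compactly, and obtains the embedding into $C([0,T];V)$ from the Hilbert-space result of Lions--Magenes rather than from this lemma; but your proof should either restrict to $1\le p<\infty$ or strengthen the hypothesis on $u_t$ for the endpoint case.
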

\begin{remark}\label{compact-injection}
Since
$D(A)\hookrightarrow V \hookrightarrow H$, and $D(A)\hookrightarrow V $ compactly, from Lemma \ref{lema1}
we have  that the injection $\mathbb{W}\hookrightarrow L^2(V)$   is compact; furthermore, using that $D(A),V,H$ are Hilbert spaces, we have  the compact injection
$\mathbb{W} \hookrightarrow C([0,T];V)$ (cf. \cite{lions}). 
\end{remark}

%Below, we give the definition of weak solution and a theorem on the
%existence and uniqueness.
%\begin{definition} (\textit{Weak solution})
%A weak solution of (\ref{eq11}) is a function $u\in L^2(D(A)) \cap
%L^\infty(V)$ with $u_t \in L^2(H)$ satisfying the weak formulation
%(\ref{eq15})-(\ref{eq16}).
%\end{definition}

\begin{theorem}\label{teor2} (\textit{Existence and uniqueness of weak solution}) Assuming that
$f\in L^2(Q)$ and $u_0 \in V$, there exists a unique weak solution
of (\ref{eq11}). Moreover, there exists a positive constant $K:=K(\nu,\alpha,\|u_0\|_{V},\|f\|_{L^2(Q)})$ such that 
\begin{equation}\label{bound_solution}
\|u\|_{\mathbb{W}}\le K.
\end{equation}
%which also verifies that $u\in C([0,T];V)$.
\end{theorem}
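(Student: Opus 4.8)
The plan is to construct the solution by the Galerkin method, derive $\alpha$-dependent a priori estimates that control the full $\mathbb{W}$-norm, pass to the limit exploiting the compactness recorded in Remark \ref{compact-injection}, and finally prove uniqueness by an energy estimate for the difference of two solutions. First I would fix the basis $\{w_j\}\subset D(A)$ of eigenfunctions of the Stokes operator $A$; this choice is convenient because, in the divergence-free setting, $\Delta_\alpha=I+\alpha^2A$ commutes with the orthogonal projection $P_m$ onto $\mathrm{span}\{w_1,\dots,w_m\}$, so the Galerkin system for $u_m=\sum_{j=1}^m c_j^m(t)w_j$ associated with (\ref{eq16}) is nondegenerate (the mass matrix coming from $(w,w)+\alpha^2(\nabla w,\nabla w)$ is diagonal and positive definite). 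By the Carath\'eodory theorem it admits a local solution, which the a priori bounds below extend globally.

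For the estimates I would test (\ref{eq16}) with $w=u_m$. The crucial cancellation $\langle B(u_m,u_m),u_m\rangle_{D(A)',D(A)}=0$ from (\ref{eq9}) removes the nonlinearity, leaving
\[\tfrac12\tfrac{d}{dt}\big(\|u_m\|^2+\alpha^2\|\nabla u_m\|^2\big)+\nu\|\nabla u_m\|^2+\nu\alpha^2\|Au_m\|^2=(f,u_m).\]
Estimating $(f,u_m)$ by Young's and Poincar\'e's inequalities, integrating, and applying Gronwall's lemma yields bounds for $u_m$ in $L^\infty(V)\cap L^2(D(A))$ depending only on $\nu,\alpha,\|u_0\|_V,\|f\|_{L^2(Q)}$. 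To control $u_t^m$ I would use the form (\ref{eq17}): since $u_m\in L^2(D(A))\cap L^\infty(V)$, the mapping properties of $\Delta_\alpha$ recorded above give $\Delta_\alpha Au_m\in L^2(D(A)')$, while (\ref{eq10}) gives $B(u_m,u_m)\in L^2(D(A)')$; as $f\in L^2(H)\hookrightarrow L^2(D(A)')$, we get $\Delta_\alpha u_t^m\in L^2(D(A)')$, and inverting the isomorphism $\Delta_\alpha=I+\alpha^2A:H\to D(A)'$ furnishes a uniform bound for $u_t^m$ in $L^2(H)$. Together these produce the bound (\ref{bound_solution}).

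Next I would extract a subsequence converging weakly-$\ast$ in $L^\infty(V)$, weakly in $L^2(D(A))$, with $u_t^m$ weakly in $L^2(H)$. By Remark \ref{compact-injection} the injection $\mathbb{W}\hookrightarrow L^2(V)$ is compact, so $u_m\to u$ strongly in $L^2(V)$; this strong convergence, together with the multilinear structure of $B$ and the bound (\ref{eq10}), is exactly what is needed to pass to the limit in $\langle B(u_m,u_m),w\rangle$ for $w$ in the span of the basis, and then for all $w\in D(A)$ by density. The linear terms pass to the limit by weak convergence, so $u$ satisfies (\ref{eq16}); the initial datum is recovered from the continuous injection $\mathbb{W}\hookrightarrow C([0,T];V)$ of Remark \ref{compact-injection}.

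Finally, for uniqueness, let $u_1,u_2$ be two solutions and set $w=u_1-u_2$, whose equation has zero force and nonlinear part $B(u_1,u_1)-B(u_2,u_2)=B(w,u_1)+B(u_2,w)$. Testing with $w$ and using $\langle B(u_2,w),w\rangle=0$ from (\ref{eq9}) leaves
\[\tfrac12\tfrac{d}{dt}\big(\|w\|^2+\alpha^2\|\nabla w\|^2\big)+\nu\|\nabla w\|^2+\nu\alpha^2\|Aw\|^2=-\langle B(w,u_1),w\rangle.\]
Here the regularization is decisive: by (\ref{eq10}) I would bound $|\langle B(w,u_1),w\rangle|\le C\|\nabla w\|\,\|Au_1\|\,\|Aw\|$ and absorb $\|Aw\|$ into the dissipative term $\nu\alpha^2\|Aw\|^2$ via Young's inequality, obtaining $\frac{d}{dt}\big(\|w\|^2+\alpha^2\|\nabla w\|^2\big)\le \frac{C}{\nu\alpha^2}\|Au_1\|^2\,\|\nabla w\|^2$; since $\|Au_1\|^2\in L^1(0,T)$ because $u_1\in L^2(D(A))$, Gronwall's lemma and $w(0)=0$ force $w\equiv0$. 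The main obstacle throughout is the nonlinear term, but — unlike the 3D Navier-Stokes case — the $\alpha^2$-smoothing supplies the dissipation $\nu\alpha^2\|Au\|^2$ and the favorable estimate (\ref{eq10}), which together let both the limit passage and the uniqueness argument close with no smallness or short-time restriction.
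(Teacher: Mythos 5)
Your existence half is essentially the paper's own argument: the same Galerkin scheme on Stokes eigenfunctions, the same energy identity after the cancellation (\ref{eq9}) giving the $L^\infty(V)\cap L^2(D(A))$ bounds, and the same device for the time derivative --- the paper bounds $\|\Delta_\alpha u_{k_t}\|_{D(A)'}$ from the equation and then uses the self-adjointness and positivity of $A$ (via $\|v\|_{D(A)'}\le\|v+\alpha Av\|_{D(A)'}$) to conclude $u_{k_t}$ is bounded in $L^2(H)$, which is your ``invert $\Delta_\alpha:H\to D(A)'$'' step in different clothing; the limit passage via Remark \ref{compact-injection} also matches. Up to that point the proposal is sound.

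The uniqueness step, however, contains a genuine error. Identity (\ref{eq9}) reads $\langle B(u,v),u\rangle_{D(A)',D(A)}=0$: the pairing vanishes when the test function equals the \emph{first} argument of $B$. Because of the extra term $(\nabla u)^*\cdot\Delta_\alpha v$ in the definition (\ref{eq8}), $B$ does \emph{not} inherit the Navier--Stokes identity $b(u,v,v)=0$, so your claim ``$\langle B(u_2,w),w\rangle=0$ from (\ref{eq9})'' is false. Testing the difference equation with $w=u_1-u_2$, the term that drops is $\langle B(w,u_1),w\rangle$ (the very term you then laboriously estimate), and the term that survives is $\langle B(u_2,w),w\rangle$. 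For this surviving term your only tool, (\ref{eq10}), yields $|\langle B(u_2,w),w\rangle|\le C\|\nabla u_2\|\,\|Aw\|^2$, which cannot be absorbed into the dissipation $\nu\alpha^2\|Aw\|^2$ without an unavailable smallness assumption on $\|\nabla u_2\|_{L^\infty(0,T;L^2)}$; so your Gronwall closure fails as written. The argument can be repaired, but not with (\ref{eq10}) alone: expand $\langle B(u_2,w),w\rangle$ via (\ref{eq7})--(\ref{eq8}), treat the $w$-part and the $\alpha^2\Delta w$-part of $\Delta_\alpha w$ separately, and interpolate so that $\|Aw\|$ appears with power strictly less than two, e.g. $\alpha^2\|u_2\|_{L^6}\|\nabla w\|_{L^3}\|\Delta w\|\le C\alpha^2\|\nabla u_2\|\,\|\nabla w\|^{1/2}\|Aw\|^{3/2}$ and $\alpha^2\|\nabla u_2\|_{L^3}\|\Delta w\|\,\|w\|_{L^6}\le C\alpha^2\|\nabla u_2\|^{1/2}\|Au_2\|^{1/2}\|\nabla w\|\,\|Aw\|$; Young's inequality then absorbs the $\|Aw\|$ factors into $\nu\alpha^2\|Aw\|^2$ and leaves Gronwall coefficients of the type $\|\nabla u_2\|^4$ and $\|\nabla u_2\|\,\|Au_2\|$, which are integrable on $(0,T)$ precisely because $u_2\in L^\infty(V)\cap L^2(D(A))$. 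The paper dispatches uniqueness with the phrase ``classical comparison argument and the Gronwall Lemma'' and so gives no template, but any correct version must respect which slot of $B$ carries the cancellation (\ref{eq9}).
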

\textit{Proof.} The existence of weak solutions follows from the classical Galerkin
approximations and energy estimates \cite{lans2,lans7,real1,lans5,lans3,VRR}; for that, let $\{w_k\}_{k=1}^\infty$ the orthonormal basis of $H$ consisting of eigenfunctions of the Stokes operator $A.$ For each $k\geq 1,$ we consider the vector space $H_k$ spanned by $\{v_1,...,v_k\},$ and $P_k$ be the $L^2$-orthogonal projection from $H$ onto $H_k.$ Then, the corresponding Galerkin approximation for  
(\ref{eq16}) consists in to find $u_k=\sum_{i=1}^kg_{ik}(t)v_i,$ for some scalar functions $g_{ik},$ $1\leq i\leq k,$ defined on $[0,T],$ such that $u_k$ solves the following system of ordinary differential equations:
\begin{equation}
\left\{
\begin{array}{rcl}%
\frac{d}{dt}\{(u_{k}, w) +\alpha^2(\nabla u_k, \nabla w)\}
+\nu (Au_k,w+\alpha^2  Aw)+\langle B(u_k,u_k), w\rangle_{D(A)',D(A)} 
&=&(P_kf, w), \quad \forall \, w\in H_k,\\
\ u_k(x,0)&=& P_ku_0(x) \mbox{ in } \Omega.
\end{array}
\right.  \label{eq16b}
\end{equation}
By the classical theory of ordinary differential equations, for each $k\geq 1,$ the system (\ref{eq16b}) has a unique solution for an interval of time $[0,T_k].$ If $T_k<T,$ then $\Vert u_k\Vert+\alpha^2\Vert \nabla u_k\Vert$ must tend to $\infty$ as $t$ goes to $T_k;$ then, uniform estimates show that this does not happen and thus $T_m=T$ (cf. \cite[Ch. 3]{temam}). To obtain the {\it a priori} estimates, we take $w=u_k(t)$ in (\ref{eq16b}) and thus, taking into
account (\ref{eq9}), we have
\begin{equation}\label{bo-1}
\frac12\frac{d}{dt}(\|u_k\|^2+\alpha^2\|\nabla u_k\|^2)+\nu\|\nabla u_k\|^2+\nu\alpha^2\|Au_k\|^2=(P_kf,u_k).
\end{equation}
From the H\"{o}lder and Young inequalities, we obtain
$$
(P_kf,u_k)\le \|f\|\|u_k\|\le C\|f\|\|\nabla u_k\|\le C\|f\|^2+\frac\nu2\|\nabla u_k\|^2,
$$
which,  jonitly to (\ref{bo-1}) implies
\begin{equation}\label{bo-2}
\frac{d}{dt}(\|u_k\|^2+\alpha^2\|\nabla u_k\|^2)+\nu\|\nabla u_k\|^2+2\nu\alpha^2\|Au_k\|^2\le C\|f\|^2.
\end{equation} 
Thus, integrating $(\ref{bo-2})$ from $0$ to $t\in[0,T]$, we have
\begin{eqnarray}
\|u_k(t)\|^2+\alpha^2\|\nabla u_k(t)\|^2+\nu\int_0^t(\|\nabla u_k(s)\|^2+2\alpha^2\|Au_k(s)\|^2)ds\le C\int_0^t\|f(s)\|^2ds+\|u_0\|^2+\alpha^2\|\nabla u_0\|^2.\label{bo-3}
\end{eqnarray}
Since $(u_0,f)\in V\times L^2(Q)$, from (\ref{bo-3}) we conclude that there exists a constant
$K_1:=K_1(\nu,\alpha,\|u_0\|_V,\|f\|_{L^2(Q)})$ such that 
\begin{equation}\label{bo-4}
\|u_k\|_{L^\infty(V)}+\|u_k\|_{L^2(D(A))}\le K_1.
\end{equation}
Moreover, from (\ref{eq16b}), for each $w\in H_k$ we deduce
\begin{eqnarray*}
\langle \Delta_\alpha u_{k_t},w\rangle_{D(A)',D(A)}
&=&-\nu(\nabla u_k, \nabla w)-\nu\alpha^2(Au_k,Aw)-\langle B(u_k,u_k),w\rangle_{D(A)',D(A)}+(P_kf,w),
\end{eqnarray*}
and then, from H\"{o}lder inequality, (\ref{eq10}) and (\ref{bo-4}) we obtain
\begin{eqnarray}
|\langle\Delta_\alpha u_{k_t},w\rangle_{D(A)',D(A)}|
&\le&C(\|\nabla u_k\|+\|Au_k\|+\|B(u_k,u_k)\|_{D(A)'}+\|f\|)\|w\|_{D(A)}\nonumber\\
&\le&C(\|\nabla u_k\|+\|Au_k\|+\|\nabla u_k\|\|Au_k\|+\|f\|)\|w\|_{D(A)}\nonumber\\
&\le&K_2(\|\nabla u_k\|+\|Au_k\|+\|f\|)\|w\|_{D(A)},\label{bo-5}
\end{eqnarray}
where $K_2:=K_2(\nu,\alpha,\|u_0\|_V,\|f\|_{L^2(Q)})$. Since $\langle\Delta_\alpha u_{k_t},w\rangle_{D(A)',D(A)}=\langle u_{k_t}+\alpha^2 Au_{k_t},w\rangle_{D(A)',D(A)}$ for 
all $w\in D(A)$, then from (\ref{bo-5}) we have
$$
\|u_t+\alpha^2 Au_{k_t}\|_{D(A)'}\le K_2(\|\nabla u_k\|+\|Au_k\|+\|f\|);
$$
thus
\begin{equation}\label{bo-6}
\|u_{k_t}+\alpha^2 Au_{k_t}\|^2_{D(A)'}\le K_3(\|\nabla u_k\|^2+\|Au_k\|^2+\|f\|^2).
\end{equation}
Integrating (\ref{bo-6}) from $0$ to $t\in[0,T]$ and taking into account (\ref{bo-4}) we obtain
\begin{equation}\label{bo-7}
\int_0^t\|u_{k_t}(s)+\alpha^2 Au_{k_t}(s)\|^2_{D(A)'}ds\le C,
\end{equation}
where $C$ is a constant which depends on $K_1,K_2$, and $K_3$.

On the other hand,  by using that the operator A is self-adjoint and positive, and arguing as \cite[Section 3]{lans3} we get
\begin{equation*}
\|v\|^2_{D(A)'}\le \|v+\alpha Av\|^2_{D(A)'}\ \mbox{ for each }\ v\in D(A)',
\end{equation*}
which implies
\begin{equation}\label{bo-8}
\|\alpha Au_{k_t}\|^2_{D(A)'}\le \|u_{k_t}+\alpha Au_{k_t}\|^2_{D(A)'}+\|u_{k_t}\|^2_{D(A)'}
\le C\|u_{k_t}+\alpha u_{k_t}\|^2_{D(A)'}.
\end{equation}
Therefore, from (\ref{bo-7}) and (\ref{bo-8}) we conclude that $Au_{k_t}\in L^2(D(A)')$; and, in particular, there exists a positive constant $C$
such that
\begin{equation}\label{bo-9}
\|u_{k_t}\|_{L^2(H)}\le C.
\end{equation}
Following a standard compactness procedure, previous estimates allow us to pass to the limit as $k$ goes to $+\infty.$ Also, (\ref{bound_solution}) follows from (\ref{bo-4}) and (\ref{bo-9}).  The uniqueness follows from a classical comparison argument and using the
Gronwall Lemma.

\section{A distributed control problem: Existence of optimal solution}
In this section, we establish the statement of the optimal control problem which we will consider. Let us denote by  $\mathcal{U}$ the admissible control set. We suppose that 
\begin{equation}\label{control_set}
\mathcal{U}\subset L^2(Q)\mbox{ is a nonempty, closed and convex set.}
\end{equation}
We consider initial  data $u_0\in V$, and the function $f\in\mathcal{U}$ describing the distributed control
acting on domain $\Omega$.
Then, we define the following constrained problem related to weak solutions of system (\ref{eq11}): 
\begin{equation}\label{eq24}
\left\{
\begin{array}{l}
\mbox{Find $(u,f)\in \mathbb{W}\times \mathcal{U}$ such that:}\\
J(u,f):=\displaystyle\frac{\gamma_u}{2}\int_0^T\|u(t)-u_d(t)\|^2_{D(A)}+\displaystyle\frac{\gamma_T}{2}\int_\Omega|u(x,T)-u_T(x)|^2dx+\displaystyle\frac{\gamma_f}{2}\int_0^T\|f(t)\|^2dt\\
\mbox{is minimized, subject to $(u,f)$ being a weak solution of  (\ref{eq11}).}
\end{array}
\right.
\end{equation}
Here, the pair $(u_d,u_T)\in D(A)\times H$ represents the desires states and the nonnegative real numbers $\gamma_u$, $\gamma_T$ and $\gamma_f$ measure 
the cost of the states and control, respectively. These numbers are non zero simultaneously. The functional $J$ defined in (\ref{eq24}) describes the
deviation of the velocities field from a desired field $u_d$, and the deviation of the velocities field in the final time $T$ from a desired field $u_T$, plus
the cost of the control $f$ measured in the $L^2$-norm.

The admissible set for the optimal control problem (\ref{eq24}) is defined by

\begin{equation}\label{eq28}
 \mathcal{S}_{ad}=\{(u,f) \in \mathbb{W}\times \mathcal{U}\,:\, (u,f)\mbox{ is a weak solution of } (\ref{eq11})\}.
\end{equation}

\subsection{Existence of Global Optimal Solution}

We will show that the optimal control problem
(\ref{eq24}) has a global optimal solution.
\begin{definition}\label{def_optimal}
A pair $(\hat{u},\hat{f})\in\mathcal{S}_{ad}$ will be called a global optimal solution of problem
(\ref{eq24}) if
\begin{equation}\label{opt-1}
J(\hat{u},\hat{f})=\min_{(u,f)\in\mathcal{S}_{ad}}J(u,f).
\end{equation}
\end{definition}
\begin{theorem}\label{existence-optimal}
Let $u_0\in V$. We assume that either $\gamma_f>0$ or $\mathcal{U}$ is bounded in $L^2(Q).$ Then the extremal problem (\ref{eq24}) has at least one global optimal solution $(\hat{u},\hat{f})\in\mathcal{S}_{ad}$.

\end{theorem}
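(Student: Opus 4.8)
The plan is to use the direct method of the calculus of variations. First I would observe that $\mathcal{S}_{ad}$ is nonempty: by Theorem~\ref{teor2}, for each fixed $f\in\mathcal{U}$ there exists a (unique) weak solution $u$ of (\ref{eq11}), so $(u,f)\in\mathcal{S}_{ad}$. Since $J\ge 0$, the infimum $m:=\inf_{(u,f)\in\mathcal{S}_{ad}}J(u,f)$ is a finite nonnegative number, and I can select a minimizing sequence $\{(u_n,f_n)\}\subset\mathcal{S}_{ad}$ with $J(u_n,f_n)\to m$.

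Next I would derive uniform bounds on this sequence. If $\gamma_f>0$, then boundedness of $J(u_n,f_n)$ forces $\tfrac{\gamma_f}{2}\int_0^T\|f_n\|^2\,dt$ to stay bounded, so $\{f_n\}$ is bounded in $L^2(Q)$; alternatively, if $\mathcal{U}$ is bounded this is immediate. Once $\{f_n\}$ is bounded in $L^2(Q)$, the a priori estimate (\ref{bound_solution}) of Theorem~\ref{teor2}, whose constant depends monotonically on $\|f_n\|_{L^2(Q)}$, yields a uniform bound $\|u_n\|_{\mathbb{W}}\le K$. I may therefore extract a subsequence (not relabeled) with $f_n\rightharpoonup\hat f$ weakly in $L^2(Q)$ and $u_n\rightharpoonup\hat u$ weakly in $L^2(D(A))$, weakly-$*$ in $L^\infty(V)$, and $u_{n_t}\rightharpoonup\hat u_t$ weakly in $L^2(H)$; by the compact injections of Remark~\ref{compact-injection}, also $u_n\to\hat u$ strongly in $L^2(V)$ and in $C([0,T];V)$. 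Since $\mathcal{U}$ is closed and convex, hence weakly closed, the limit obeys $\hat f\in\mathcal{U}$.

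The main obstacle is then to show $(\hat u,\hat f)\in\mathcal{S}_{ad}$, that is, that the limit pair solves the variational formulation (\ref{eq16}); this reduces to passing to the limit in the nonlinear term $B(u_n,u_n)$. Here the strong convergence $u_n\to\hat u$ in $L^2(V)$ supplied by the compact embedding is decisive: combining it with the weak convergence in $L^2(D(A))$ and the trilinear estimate (\ref{eq10}), I would pass to the limit in $\langle B(u_n,u_n),w\rangle_{D(A)',D(A)}$ for each fixed $w\in D(A)$, the remaining linear terms passing by weak convergence. The strong convergence in $C([0,T];V)$ additionally gives $u_n(0)\to\hat u(0)$ and $u_n(T)\to\hat u(T)$ in $V$, so the initial condition $\hat u(0)=u_0$ is preserved and $(\hat u,\hat f)$ is indeed a weak solution.

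Finally I would invoke the weak lower semicontinuity of $J$. The first and third integrals, being squared norms on $L^2(D(A))$ and $L^2(Q)$ respectively, are convex and continuous, hence weakly lower semicontinuous, while the terminal term $\tfrac{\gamma_T}{2}\int_\Omega|u_n(T)-u_T|^2\,dx$ converges by the strong convergence $u_n(T)\to\hat u(T)$ in $V\hookrightarrow H$. Therefore $J(\hat u,\hat f)\le\liminf_n J(u_n,f_n)=m$; since $(\hat u,\hat f)\in\mathcal{S}_{ad}$ gives the reverse inequality $J(\hat u,\hat f)\ge m$, I conclude $J(\hat u,\hat f)=m$, so $(\hat u,\hat f)$ is a global optimal solution in the sense of Definition~\ref{def_optimal}.
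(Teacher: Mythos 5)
Your proposal is correct and follows essentially the same route as the paper's proof: nonemptiness of $\mathcal{S}_{ad}$ via Theorem~\ref{teor2}, a minimizing sequence bounded through $\gamma_f>0$ or boundedness of $\mathcal{U}$ together with the a priori estimate (\ref{bound_solution}), weak/strong compactness via Remark~\ref{compact-injection}, weak closedness of $\mathcal{U}$, limit passage in (\ref{eq16}), and weak lower semicontinuity of $J$. The only difference is that you spell out details the paper leaves as ``standard'' (the splitting needed for the nonlinear term and the convexity argument behind lower semicontinuity), which is a faithful elaboration rather than a different approach.
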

\begin{proof} 
From Theorem \ref{teor2}, we have that
 $\mathcal{S}_{ad}$ is nonempty.  Let $\{(u^m,f^m)\}_{m\ge 1}\subset\mathcal{S}_{ad}$ be a minimizing
sequence of $J$, that is, $\displaystyle\lim_{m\rightarrow\infty}J(u^m,f^m)=\inf_{(u,f)\in\mathcal{S}_{ad}}J(u,f)$. Then, from definition of $\mathcal{S}_{ad}$,
for each $m\in\mathbb{N}$, $(u^m,f^m)$ satisfies system (\ref{eq16}).

Moreover, from the definition of $J$ and the assumption $\gamma_f>0$ or $\mathcal{U}$ is bounded in $L^2(Q)$, we deduce that
\begin{equation}\label{opt-2}
\{f^m\}_{m\ge 1}\mbox{ is bounded in }L^2(Q).
\end{equation} 
From (\ref{bound_solution}) we deduce that there exists a positive constant $C$, independent of $m$, such that
\begin{equation}\label{opt-3}
\|u^m\|_{\mathbb{W}}\le C.
\end{equation}
Then, from (\ref{opt-2}), (\ref{opt-3}), and taking into account that $\mathcal{U}$ is a closed and convex
subset of $L^2(Q)$ (hence is weakly closed in $L^2(Q)$), we deduce that there exists an element
$(\hat{u},\hat{v})\in\mathbb{W}\times\mathcal{U}$ such that, for some subsequence of
$\{(u^m,f^m)\}_{m\ge 1}$, still denoted by $\{(u^m,f^m)\}_{m\ge 1}$, the following convergences  hold, as $m\rightarrow\infty$:

\begin{equation}\label{opt-4}
\left\{
\begin{array}{rcl}
u^m &\rightarrow& \hat{u} \mbox{ weakly in } \mathbb{W},\\
%u^m_t &\rightarrow& \hat{u}_t \mbox{ weakly in } L^2(Q),\\
%A u^m_t &\rightarrow& A\hat{u}_t\mbox{ weakly in } L^2((D(A)'),\\
f^m &\rightarrow& \hat{f} \mbox{ weakly in } L^2(Q),\, \mbox{ and }\, \hat{f}\in\mathcal{U}.
\end{array}
\right.
\end{equation}
From Remark \ref{compact-injection}, we  have
\begin{equation}\label{opt-4-1}
u^m\rightarrow\hat{u}\mbox{ strongly in }L^2(V)\cap C([0,T];V).
\end{equation}
Moreover, from (\ref{opt-4-1}) we have that $u^m(0)$ converges to $\hat{u}(0)$ in $V$, and since $u^m(0)=u_0$ for all $m$, 
we deduce that $\hat{u}(0)=u_0$. Thus, $\hat{u}$ satisfies the initial condition given in  (\ref{eq16})$_2$. Therefore, considering the convergences  (\ref{opt-4})-(\ref{opt-4-1}), and following a standard argument we can
pass to the limit in (\ref{eq16})$_1$ written  by $(u^m,f^m)$, as $m$ goes to $\infty$, and we conclude that $(\hat{u},\hat{f})$ is a solution of $(\ref{eq16})$.
Consequently $(\hat{u},\hat{f})\in\mathcal{S}_{ad}$ and 
\begin{equation}\label{opt-5}
\lim_{m\rightarrow\infty}J(u^m,f^m)=\inf_{(u,f)\in\mathcal{S}_{ad}}J(u,f)\le J(\hat{u},\hat{f}).
\end{equation}
Also, since $J$ is lower semicontinuous on admissible set $\mathcal{S}_{ad}$, we have
$J(\hat{u},\hat{f})\le \displaystyle\liminf_{m\rightarrow\infty}J(u^m,f^m)$, which jointly to (\ref{opt-5}), implies
(\ref{opt-1}).
 \end{proof}

\section{First-order optimality conditions}
In this section we will derive an optimality system for a local optimal solution $(\hat{u},\hat{f})$ of control  problem (\ref{eq24}). We will base on a generic result given by
Zowe et al. \cite{zowe}  on the existence of Lagrange multipliers in Banach spaces (see, also \cite[Ch. 6]{troltz}). This method has been used by Guill\'en-Gonz\'alez et al.
\cite{guillen-1,guillen-2} in the context of  chemo-repulsion systems.

To introduce the  results given in \cite{zowe} we consider the
following abstract optimization problem:
\begin{equation}\label{p-1}
\min_{x\in\mathcal{M}}J(s)\mbox{ subject to }F(x)=0,
\end{equation}
where $J:\mathbb{X}\rightarrow\mathbb{R}$ is a functional, $F:\mathbb{X}\rightarrow\mathbb{Y}$ is an operator,
$\mathbb{X}$ and $\mathbb{Y}$ are Banach spaces, and $\mathcal{M}\subset\mathbb{X}$ is a nonempty, closed
and convex set. The admissible set for problem (\ref{p-1}) is given by 
$$
\mathcal{S}=\{x\in\mathcal{M}\,:\, F(x)=0\}.
$$
Moreover, we consider the functional $L:\mathbb{X}\times\mathbb{Y}'\rightarrow\mathbb{R}$ given by
\begin{equation}\label{p-2}
L(x,\lambda):=J(x)-\langle\lambda,F(x)\rangle_{\mathbb{Y}',\mathbb{Y}},
\end{equation}
which is called Lagrangian functional related to problem (\ref{p-1}).
\begin{definition}(Lagrange multiplier)\label{multiplier}
Let $\hat{x}\in\mathcal{S}$ be a local optimal solution of (\ref{p-1}). Suppose that $J$ and $F$
are Fr\'echet differentiable in $\hat{x}$, with derivatives denote by $J'(\hat{x})$ and $F'(\hat{x})$, respectively.
Then, $\lambda\in\mathbb{Y}'$ is called Lagrange multiplier for problem (\ref{p-1}) at the point $\hat{x}$ if
\begin{equation}\label{p-3}
\left\{\begin{array}{rcl}
\langle\lambda, F(\tilde{x})\rangle_{\mathbb{Y}'}&=&0,\\
L'(\hat{x},\lambda)[s]&:=&J'(\hat{x})[s]-\langle\lambda,F'(\hat{x})[s]\rangle_{\mathbb{Y}',\mathbb{Y}}\ge 0\quad \forall s\in\mathcal{C}(\hat{x}),
\end{array}
\right.
\end{equation}
where $\mathcal{C}(\hat{x})$ is the conical hull of $\tilde{x}$ in $\mathcal{M}$, that is, 
$\mathcal{C}(\hat{x})=\{\theta(x-\hat{x})\,:\, x\in\mathcal{M},\ \theta\ge 0\}$.
\end{definition}
\begin{definition}\label{regular-point}
Let $\hat{x}\in\mathcal{S}$ be a local optimal solution of problem (\ref{p-1}). We say that $\hat{x}$ is  a regular
point if
\begin{equation}\label{p-4}
F'(\hat{x})[\mathcal{C}(\hat{x})]=\mathbb{Y}.
\end{equation}
\end{definition}
The following result guarantees the existence of Lagrange multiplier for problem (\ref{p-1}); the proof can be found in
\cite[Theorem 3.1]{zowe} and \cite[Theorem 6.3, p. 330]{troltz}.
\begin{theorem}\label{existence-regular}
Let $\hat{x}\in\mathcal{S}$ be a local optimal solution of problem (\ref{p-1}). Suppose that $J$ is Fr\'echet differentiable in $\hat{x}$
and $F$ is continuously Fr\'echet differentiable in $\hat{x}$. If $\hat{x}$ is a regular point, then the set of Lagrange multipliers
for (\ref{p-1}) at $\hat{x}$ is nonempty.
\end{theorem}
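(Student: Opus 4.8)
The statement is the generalized Lagrange multiplier theorem of Zowe--Kurcyusz, so the plan is to reproduce its separation argument rather than to devise anything new; indeed, in the paper one may simply cite \cite[Theorem 3.1]{zowe} and \cite[Theorem 6.3]{troltz}. The first move is a trivial reduction. Since $\hat{x}\in\mathcal{S}$ means $F(\hat{x})=0$, the equality $\langle\lambda,F(\hat{x})\rangle_{\mathbb{Y}',\mathbb{Y}}=0$ in (\ref{p-3}) holds automatically for \emph{every} $\lambda\in\mathbb{Y}'$. Thus the entire content is to produce $\lambda\in\mathbb{Y}'$ with $J'(\hat{x})[s]-\langle\lambda,F'(\hat{x})[s]\rangle_{\mathbb{Y}',\mathbb{Y}}\ge0$ for all $s\in\mathcal{C}(\hat{x})$. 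Because $\mathcal{C}(\hat{x})$ is a convex cone on which $J'(\hat{x})$ and $F'(\hat{x})$ act linearly, this is a Farkas-type duality statement about two linear functionals on a cone.

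The technical heart is to translate local optimality of $\hat{x}$ into a sign condition on the linearizing cone. Here I would use the continuous Fr\'echet differentiability of $F$ near $\hat{x}$ together with the regularity hypothesis (\ref{p-4}): by a generalized open mapping (Ljusternik/Robinson metric-regularity) argument, every linearized feasible direction $s\in\mathcal{C}(\hat{x})$ with $F'(\hat{x})[s]=0$ can be realized as the tangent at $t=0^{+}$ of a genuine admissible arc $x(t)\in\mathcal{S}$ with $x(0)=\hat{x}$ and $x(t)=\hat{x}+ts+o(t)$. Along any such arc local optimality gives $J(x(t))\ge J(\hat{x})$ for small $t>0$, and dividing by $t$ and letting $t\to0^{+}$ yields $J'(\hat{x})[s]\ge0$. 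In other words, the implication
\[
s\in\mathcal{C}(\hat{x}),\quad F'(\hat{x})[s]=0\ \Longrightarrow\ J'(\hat{x})[s]\ge0
\]
holds, which is exactly the hypothesis of a generalized Farkas lemma.

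It then remains to extract $\lambda$ by duality. I would form the convex cone $\mathcal{K}=\{(J'(\hat{x})[s]+\rho,\,F'(\hat{x})[s]):s\in\mathcal{C}(\hat{x}),\ \rho\ge0\}\subset\mathbb{R}\times\mathbb{Y}$ and observe that the previous implication forbids $(-1,0)$ from lying in $\mathcal{K}$. The regularity condition (\ref{p-4}), namely surjectivity $F'(\hat{x})[\mathcal{C}(\hat{x})]=\mathbb{Y}$, is invoked here a second time: it guarantees the closedness needed so that $(-1,0)\notin\overline{\mathcal{K}}$, and it simultaneously rules out the degenerate separator that sees only the $\mathbb{Y}$-component. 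Separating $(-1,0)$ from the closed convex cone $\overline{\mathcal{K}}$ by a functional $(\mu,\lambda)\in\mathbb{R}\times\mathbb{Y}'$ (using $(\mathbb{R}\times\mathbb{Y})'=\mathbb{R}\times\mathbb{Y}'$) and letting $\rho\to+\infty$ forces $\mu\ge0$, while the strict inequality at $(-1,0)$ gives $\mu>0$; normalizing $\mu=1$ and replacing $\lambda$ by $-\lambda$ delivers $J'(\hat{x})[s]-\langle\lambda,F'(\hat{x})[s]\rangle_{\mathbb{Y}',\mathbb{Y}}\ge0$ for all $s\in\mathcal{C}(\hat{x})$, i.e.\ a Lagrange multiplier in the sense of (\ref{p-3}).

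The step I expect to be the main obstacle is the realization of linearized feasible directions as true admissible arcs, that is, the generalized open mapping / metric-regularity argument built on the Zowe--Kurcyusz condition (\ref{p-4}); this is precisely the point where the abstract surjectivity hypothesis is converted into usable first-order feasible perturbations, and it is also the place where the $C^1$-regularity of $F$ (as opposed to mere differentiability at $\hat{x}$) is indispensable. Once that tangency result and the accompanying closedness are in hand, the reduction, the separation, and the nondegeneracy normalization are comparatively routine, and Theorem \ref{existence-regular} follows.
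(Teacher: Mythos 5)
Your proposal is correct and coincides with the paper's treatment: the paper does not prove Theorem~\ref{existence-regular} at all, but simply cites \cite[Theorem 3.1]{zowe} and \cite[Theorem 6.3, p.~330]{troltz}, and your sketch is a faithful reconstruction of that Zowe--Kurcyusz argument (generalized open mapping theorem to realize linearized feasible directions and to keep $(-1,0)$ out of $\overline{\mathcal{K}}$, then separation and normalization of the multiplier). The only cosmetic caveat is that regularity does not make $\mathcal{K}$ itself closed; what the open mapping theorem actually supplies is a bounded right inverse of $F'(\hat{x})$ on the cone, which is exactly enough to exclude $(-1,0)$ from the closure, as your argument in fact uses.
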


Now, we will reformulate the control problem (\ref{eq24}) in the abstract context (\ref{p-1}). We consider the following Banach spaces
\begin{equation}\label{spaces_XY}
\mathbb{X}:=\mathbb{W}\times L^2(Q),\ \mathbb{Y}:=L^2(D(A)')\times V,
\end{equation}
and the operator $F:=(F_1,F_2):\mathbb{X}\rightarrow\mathbb{Y}$, where $F_1:\mathbb{X}\rightarrow L^2(D(A)')$ and 
$F_2:\mathbb{X}\rightarrow V$ are defined in each point $x:=(u,f)\in\mathbb{X}$ by
\begin{equation}\label{p-5}
\left\{
\begin{array}{rcl}
F_1(x)&=&\Delta_\alpha u_t+\nu\Delta_\alpha Au+B(u,u)-f,\\
F_2(x)&=&u(0)-u_0.
\end{array}
\right.
\end{equation}
Taking $\mathcal{M}:=\mathbb{W}\times \mathcal{U}$, the optimal control problem (\ref{eq24}) is reformulated as follows:
\begin{equation}\label{p-6}
\min_{x\in\mathcal{M}}J(x)\ \mbox{ subject to }\ F(x)={ 0}.
\end{equation}

We observe that from Definition \ref{multiplier} it follows that the Lagrangian associated to control problem (\ref{p-6}) is the functional
$L:\mathbb{X}\times\mathbb{Y}'\rightarrow\mathbb{R}$ defined by
\begin{equation}\label{p-7}
L(x,\lambda,\eta)=J(x,\lambda,\eta)-\langle F_1(x),\lambda\rangle_{L^2(D(A)'),L^2(D(A))}-\langle\eta,F_2(x)\rangle_{V',V}.
\end{equation}
Moreover, taking into account that $\mathcal{M}$ is a closed and convex subset of $\mathbb{X}$, we have that the set
of admissible solutions of problem (\ref{p-6}) is
$$
\mathcal{S}_{ad}=\{x=(u,f)\in\mathcal{M}\,:\, F(x)=0\}.
$$
With respect to differentiability of functional $J$ and operator $F$, we have the following results, whose proof is standard. 
\begin{lemma}\label{deriJ}
The functional $J$ is Fr\'echet differentiable and the Fr\'echet drivative of $J$ in  $\hat{x}=(\hat{u},\hat{f})\in\mathbb{X}$ in
the direction ${r}=(w,z) \in \mathbb{X}$ is 
\begin{equation}\label{p-8}
J'(\hat{x})[r]=\gamma_u\int_0^T (Aw, A{\hat u}-Au_d)dt+\gamma_T(w(T),\hat{u}(T)-u_T)
+\gamma_f\int_0^T(\hat{f},z)dt.
\end{equation}
\end{lemma}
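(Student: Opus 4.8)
The plan is to exploit the fact that $J$ is a sum of three quadratic terms, each built from an inner product on a Hilbert space, so that Fr\'echet differentiability reduces to the elementary computation for a map of the form $x\mapsto\tfrac12\|Lx-g\|^2$, whose derivative in a direction $r$ is $(Lx-g,Lr)$. Concretely, using the identity $\|u\|_{D(A)}=\|Au\|$ (legitimate since $\|A\cdot\|$ and $\|\cdot\|_{H^2}$ are equivalent on $D(A)$), I would first compute, for $\hat{x}=(\hat{u},\hat{f})$ and a direction $r=(w,z)\in\mathbb{X}$, the increment $J(\hat{x}+r)-J(\hat{x})$ by expanding each of the three squared norms through bilinearity of the underlying inner products.

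I would then split the increment into the part linear in $r$ and the part quadratic in $r$. The linear part is exactly the claimed expression (\ref{p-8}): the first term produces $\gamma_u\int_0^T(A(\hat{u}-u_d),Aw)\,dt$, the terminal term produces $\gamma_T(\hat{u}(T)-u_T,w(T))$, and the control term produces $\gamma_f\int_0^T(\hat{f},z)\,dt$. The quadratic remainder is
$$R(r)=\frac{\gamma_u}{2}\int_0^T\|Aw\|^2\,dt+\frac{\gamma_T}{2}\|w(T)\|^2+\frac{\gamma_f}{2}\int_0^T\|z\|^2\,dt.$$

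To conclude, I would check two things. First, that $r\mapsto J'(\hat{x})[r]$ is a bounded linear functional on $\mathbb{X}=\mathbb{W}\times L^2(Q)$: the three summands are controlled by Cauchy--Schwarz, using $\hat{u}-u_d\in L^2(D(A))$ (since $\hat{u}\in\mathbb{W}\subset L^2(D(A))$ and $u_d\in D(A)$), $\hat{f}\in L^2(Q)$, and the continuity of the terminal-time evaluation. Second, that $R(r)=o(\|r\|_{\mathbb{X}})$; in fact each summand is $O(\|r\|^2_{\mathbb{X}})$, since $\|w\|_{L^2(D(A))}\le\|w\|_{\mathbb{W}}$ and $\|z\|_{L^2(Q)}\le\|r\|_{\mathbb{X}}$ trivially.

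The only nontrivial point, and hence the one I would pin down carefully, is the terminal term $\|w(T)\|^2$: both the boundedness of $J'(\hat{x})$ and the smallness of $R$ hinge on the evaluation $w\mapsto w(T)$ being continuous from $\mathbb{W}$ into $H$. This is supplied by the continuous embedding $\mathbb{W}\hookrightarrow C([0,T];V)$ recalled in Remark \ref{compact-injection}, which together with $V\hookrightarrow H$ yields $\|w(T)\|\le\|w(T)\|_V\le C\|w\|_{\mathbb{W}}\le C\|r\|_{\mathbb{X}}$, so $\tfrac{\gamma_T}{2}\|w(T)\|^2\le C\|r\|^2_{\mathbb{X}}=o(\|r\|_{\mathbb{X}})$. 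With these estimates one obtains $J(\hat{x}+r)-J(\hat{x})-J'(\hat{x})[r]=R(r)=o(\|r\|_{\mathbb{X}})$, so $J'(\hat{x})$ is indeed the Fr\'echet derivative; finally, since $\hat{x}\mapsto J'(\hat{x})$ is affine in $\hat{x}$, it is continuous, and $J$ is in fact continuously Fr\'echet differentiable.
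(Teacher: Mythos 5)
Your proposal is correct and coincides with the argument the paper has in mind: the paper merely labels the proof of Lemma \ref{deriJ} as ``standard'' and omits it, and the standard proof is precisely your quadratic expansion identifying the linear part with (\ref{p-8}) and showing the remainder is $O(\|r\|^2_{\mathbb{X}})$. You also correctly isolate and resolve the only non-elementary point, the continuity of the terminal evaluation $w\mapsto w(T)$ from $\mathbb{W}$ into $H$, via the embedding $\mathbb{W}\hookrightarrow C([0,T];V)$ of Remark \ref{compact-injection}.
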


\begin{lemma}\label{deriF}
The operator $F$ is continuously Fr\'echet differentiable and the Fr\'echet derivative of $F$ in $\hat{x}=(\hat{u},\hat{f})\in\mathbb{X}$
in the direction $r=(w,z)\in\mathbb{X}$ is the linear and bounded operator $F'(\hat{x})[r]=(F_1'(\hat{x})[r],F_2'(\hat{x})[r])$ defined by

\begin{equation}\label{p-9}
\left\{
\begin{array}{rcl}
F'_1(\hat{x})[r]&=&\Delta_\alpha w_t+\nu \Delta_\alpha Aw +B'(\hat{u},\hat{u})w-z,\\
F'_2(\hat{x})[r]&=&w(0),
\end{array}
\right.
\end{equation}
where $B'(\hat{u},\hat{u})w:=B(\hat{u},w)+B(w,\hat{u})$ is the Fr\'echet derivative of $B$ with respect to $u$ in an arbitrary point $(\hat{u},\hat{u})$.
\end{lemma}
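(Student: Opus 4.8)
The plan is to split $F=(F_1,F_2)$ into its affine and genuinely nonlinear parts and treat each separately, since every term of $F$ except $B(u,u)$ depends affinely on $x=(u,f)$.

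First I would dispose of the affine pieces. Write $F_1(x)=\Lambda(x)+B(u,u)$, where $\Lambda(u,f):=\Delta_\alpha u_t+\nu\Delta_\alpha Au-f$ is linear in $x$. For $u\in\mathbb{W}$ one has $u_t\in L^2(H)$, and moving the Laplacian onto the test function $w\in D(A)$ gives $\langle\Delta_\alpha u_t,w\rangle_{D(A)',D(A)}=(u_t,w)+\alpha^2(u_t,Aw)$, whence $\|\Delta_\alpha u_t\|_{L^2(D(A)')}\le C\|u_t\|_{L^2(H)}$. Together with the bound $\Delta_\alpha Au\in L^2(D(A)')$ recalled in Section 2 and the continuous embedding $L^2(Q)\hookrightarrow L^2(D(A)')$, this shows $\Lambda$ is a bounded linear operator from $\mathbb{X}$ into $L^2(D(A)')$, hence its own Fr\'echet derivative. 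The map $F_2(x)=u(0)-u_0$ is affine, and the evaluation $u\mapsto u(0)$ is bounded from $\mathbb{W}$ into $V$ thanks to the continuous injection $\mathbb{W}\hookrightarrow C([0,T];V)$ of Remark \ref{compact-injection}; thus $F_2'(\hat x)[r]=w(0)$. Since both derivatives are independent of the base point $\hat x$, they are trivially continuous in $\hat x$.

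Next I would handle the quadratic term $N(u):=B(u,u)$. Using that $B$ is bilinear (cf. \ref{eq8}), the exact increment is
\[
N(\hat u+w)-N(\hat u)=\underbrace{B(\hat u,w)+B(w,\hat u)}_{=:B'(\hat u,\hat u)w}+B(w,w).
\]
The candidate derivative $B'(\hat u,\hat u)w$ is linear in $w$, and estimate \ref{eq10} gives $\|B'(\hat u,\hat u)w\|_{L^2(D(A)')}\le C(\|\hat u\|_{L^\infty(V)}\|w\|_{L^2(D(A))}+\|w\|_{L^\infty(V)}\|\hat u\|_{L^2(D(A))})\le C\|\hat u\|_{\mathbb{W}}\|w\|_{\mathbb{W}}$, so it is a bounded linear operator. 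For the remainder the same estimate yields $\|B(w,w)\|_{L^2(D(A)')}\le C\|w\|_{L^\infty(V)}\|w\|_{L^2(D(A))}\le C\|w\|_{\mathbb{W}}^2=o(\|w\|_{\mathbb{W}})$, which is precisely Fr\'echet differentiability of $N$ at $\hat u$ with derivative $B'(\hat u,\hat u)$. Adding back the affine contributions yields formula \ref{p-9} for $F'(\hat x)[r]$.

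Finally, for continuous differentiability I would estimate the operator norm of $F'(\hat u_1)-F'(\hat u_2)$. Only the $B'$ part depends on the base point, and bilinearity gives $(B'(\hat u_1,\hat u_1)-B'(\hat u_2,\hat u_2))w=B(\hat u_1-\hat u_2,w)+B(w,\hat u_1-\hat u_2)$, which \ref{eq10} bounds by $C\|\hat u_1-\hat u_2\|_{\mathbb{W}}\|w\|_{\mathbb{W}}$; hence $\|F'(\hat u_1)-F'(\hat u_2)\|_{\mathcal{L}(\mathbb{X},\mathbb{Y})}\le C\|\hat u_1-\hat u_2\|_{\mathbb{W}}$, so $F\in C^1$. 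The argument is largely bookkeeping; the only real work is checking that each term lands in $L^2(D(A)')$ with the stated norms, and the single estimate that carries the whole proof is the trilinear bound \ref{eq10}, which I expect to be the crux.
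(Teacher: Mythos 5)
Your proof is correct and is precisely the ``standard'' argument the paper has in mind: the paper states Lemma \ref{deriF} without proof (it only remarks that the proof is standard), and your decomposition of $F_1$ into the bounded affine part $\Delta_\alpha u_t+\nu\Delta_\alpha Au-f$ plus the quadratic term $B(u,u)$, with the candidate derivative $B(\hat u,w)+B(w,\hat u)$ and the remainder $B(w,w)$ controlled via the trilinear estimate (\ref{eq10}), is the canonical way to fill it in. All the ingredients you invoke are indeed available in the paper --- the mapping properties $\Delta_\alpha u_t\in L^2(D(A)')$ and $\Delta_\alpha Au\in L^2(D(A)')$ from Section 2, the embedding $\mathbb{W}\hookrightarrow C([0,T];V)$ of Remark \ref{compact-injection} for $F_2$, and the split bounds $\|B(\hat u,w)\|_{L^2(D(A)')}\le C\|\hat u\|_{L^\infty(V)}\|w\|_{L^2(D(A))}$, $\|B(w,w)\|_{L^2(D(A)')}\le C\|w\|_{\mathbb{W}}^2=o(\|w\|_{\mathbb{W}})$, together with the Lipschitz estimate for $\hat x\mapsto F'(\hat x)$ that yields $C^1$ regularity --- so there is no gap.
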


Now, we wish to prove the existence of Lagrange multipliers, which is guaranteed if a local optimal
solution of problem (\ref{p-6}) is a regular point (see Theorem \ref{existence-regular} above).

\begin{remark}\label{rem_regular}
From Definition \ref{regular-point} we conclude that $\hat{x}=(\hat{u},\hat{f})\in\mathcal{S}_{ad}$ is a regular point if for any
$(f_u,w_0)\in\mathbb{Y}$ there exists $r=(w_0,z)\in\mathbb{W}\times\mathcal{C}(\hat{f})$ such that
$$
F'(\hat{x})[r]=(f_u,w_0),
$$
where $\mathcal{C}(\hat{f}):=\{\theta(f-\hat{f})\,:\, \theta\ge0, \, f\in\mathcal{U}\}$ is the conical hull of $\hat{f}$ in $\mathcal{U}$.
\end{remark}
\begin{lemma}\label{lem_regular}
Let $\hat{x}=(\hat{u},\hat{f})\in\mathcal{S}_{ad}.$ Then $\hat{x}$ is a regular point.
\end{lemma}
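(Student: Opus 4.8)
The plan is to verify the regularity condition exactly in the form given by Remark \ref{rem_regular}: given arbitrary data $(f_u,w_0)\in\mathbb{Y}=L^2(D(A)')\times V$, I must exhibit a direction $r=(w,z)\in\mathbb{W}\times\mathcal{C}(\hat{f})$ with $F'(\hat{x})[r]=(f_u,w_0)$. Since $0\in\mathcal{C}(\hat{f})$ (take $\theta=0$ and any $f\in\mathcal{U}$), it suffices to choose $z=0$ and, recalling the expression for $F'$ in Lemma \ref{deriF}, to solve the linearized evolution problem
\begin{equation*}
\Delta_\alpha w_t+\nu\Delta_\alpha Aw+B(\hat{u},w)+B(w,\hat{u})=f_u\ \text{ in }\ L^2(D(A)'),\qquad w(0)=w_0,
\end{equation*}
for some $w\in\mathbb{W}$. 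In this way the lemma is reduced to an existence result for a linear nonautonomous equation whose coefficients depend on the fixed state $\hat{u}\in\mathbb{W}$.

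I would solve this by the Faedo--Galerkin method, paralleling the proof of Theorem \ref{teor2}. Projecting onto the eigenspaces $H_k$ of the Stokes operator yields a linear system of ODEs, globally solvable on $[0,T]$. Testing the $k$-th approximation $w_k$ with itself gives the energy identity
\begin{equation*}
\frac12\frac{d}{dt}\big(\|w_k\|^2+\alpha^2\|\nabla w_k\|^2\big)+\nu\|\nabla w_k\|^2+\nu\alpha^2\|Aw_k\|^2+\langle B(\hat{u},w_k),w_k\rangle=\langle f_u,w_k\rangle,
\end{equation*}
where the contribution $\langle B(w_k,\hat{u}),w_k\rangle$ vanishes identically by the orthogonality property (\ref{eq9}).

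The crux, and the only genuinely delicate point, is the control of the surviving trilinear term $\langle B(\hat{u},w_k),w_k\rangle$, which does not cancel. Using the pointwise bound that precedes (\ref{eq10}) together with three-dimensional interpolation inequalities such as $\|\nabla w_k\|_{L^3}\le C\|\nabla w_k\|^{1/2}\|Aw_k\|^{1/2}$ and $\|w_k\|_{L^\infty}\le C\|\nabla w_k\|^{1/2}\|Aw_k\|^{1/2}$, each summand can be dominated by a quantity of the form $\epsilon\|Aw_k\|^2+C_\epsilon\,\phi(t)\,\|\nabla w_k\|^2$, where $\phi\in L^1(0,T)$ depends only on $\|\hat{u}\|_{L^\infty(V)}$ and $\|\hat{u}\|_{L^2(D(A))}$, both finite because $\hat{u}\in\mathbb{W}$. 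Choosing $\epsilon$ small to absorb the top-order term into the viscous dissipation $\nu\alpha^2\|Aw_k\|^2$, estimating $\langle f_u,w_k\rangle$ by duality and Young's inequality, and invoking Gronwall's lemma, I obtain the $k$-uniform bound $\|w_k\|_{L^\infty(V)}+\|w_k\|_{L^2(D(A))}\le C$. The bound for $w_{k_t}$ in $L^2(H)$ then follows exactly as in (\ref{bo-5})--(\ref{bo-9}), now relying on the linear estimate $\|B(\hat{u},w_k)\|_{D(A)'}+\|B(w_k,\hat{u})\|_{D(A)'}\le C\|w_k\|_{D(A)}$ coming from (\ref{eq10}).

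With these uniform estimates in hand, the compact injection $\mathbb{W}\hookrightarrow L^2(V)$ of Remark \ref{compact-injection} allows me to extract a subsequence converging to a limit $w\in\mathbb{W}$; since the problem is linear in $w$, the passage to the limit in all terms, including the terms $B(\hat{u},w_k)$ and $B(w_k,\hat{u})$ that are linear in the unknown, is direct and does not require the full nonlinear compactness argument. The limit satisfies the linearized system with $w(0)=w_0$, so $r=(w,0)\in\mathbb{W}\times\mathcal{C}(\hat{f})$ fulfils $F'(\hat{x})[r]=(f_u,w_0)$. As $(f_u,w_0)\in\mathbb{Y}$ was arbitrary, this yields $F'(\hat{x})[\mathcal{C}(\hat{x})]=\mathbb{Y}$, and hence $\hat{x}$ is a regular point. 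I expect the absorption estimate for $\langle B(\hat{u},w_k),w_k\rangle$ to be the main obstacle, as it is where the three-dimensional geometry and the $\alpha$-dependent structure of $B$ force a careful use of interpolation.
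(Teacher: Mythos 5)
Your proposal is correct and follows essentially the same route as the paper: you take $z=0\in\mathcal{C}(\hat{f})$ to reduce the regular-point condition to solvability of the linearized problem (\ref{p-10}), which the paper likewise disposes of by Galerkin approximations and energy estimates ``similarly as the proof of Theorem \ref{teor2}.'' Your detailed treatment of the surviving trilinear term $\langle B(\hat{u},w_k),w_k\rangle$ (interpolation, absorption into $\nu\alpha^2\|Aw_k\|^2$, Gronwall) merely fills in the estimates the paper leaves implicit, and it does so correctly.
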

\begin{proof}
Let $(\hat{u},\hat{f})\in\mathcal{S}_{ad}$ fixed and $(f_u,w_0)\in\mathbb{Y}$. Since 
$0\in \mathcal{C}(\hat{f})=\{\theta(f-\hat{f})\,:\, \theta\ge0,\, f\in\mathcal{U}\}$, it is enough to prove
the existence of $w\in\mathbb{W}$ such that solve the following linear problem
\begin{equation}\label{p-10}
\left\{
\begin{array}{rcl}
\Delta_\alpha w_t+\nu\Delta_\alpha Aw+B'(\hat{u},\hat{u})w&=&f_u,\\
w(0)&=&w_0.
\end{array}
\right.
\end{equation}
The existence of solutions of system (\ref{p-10}) follows from Galerkin approximations and energy estimates, similarly 
as the proof of Theorem \ref{teor2}.
\end{proof}
In the following result, we prove the existence of Lagrange multipliers for optimal control problem (\ref{p-6}) 
related to a local optimal solution $\hat{x}=(\hat{u},\hat{f})\in\mathcal{S}_{ad}$.
\begin{theorem}\label{mult}
Let $\hat{x}=(\hat{u},\hat{f})\in\mathcal{S}_{ad}$ be a local optimal solution for problem (\ref{p-6}). Then, there exists
a Lagrange multiplier $(\lambda,\eta)\in L^2(D(A))\times V'$ such that for all $(w,z)\in \mathbb{W}\times\mathcal{C}(\hat{f})$
the following variational inequality holds
\begin{eqnarray}\label{p-11}
&&\gamma_u\int_0^T(Aw,A\hat{u}-Au_d)dt+\gamma_T(w(T),\hat{u}(T)-u_T)+\gamma_f\int_0^T(\hat{f},z)\,dt\nonumber\\
&&-\int_0^T\langle\Delta_\alpha w_t+\nu\Delta_\alpha Aw+B'(\hat{u},\hat{u})w-z,\lambda\rangle_{D(A)',D(A)}\,dt
-\langle\eta,w(0)\rangle_{V',V}\ge 0.
\end{eqnarray}
\end{theorem}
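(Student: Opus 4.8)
The plan is to read off (\ref{p-11}) directly from the abstract first-order condition (\ref{p-3}) furnished by the Lagrange multiplier theorem, Theorem \ref{existence-regular}. The genuine analytic work has already been carried out in the preceding lemmas, so the argument reduces to checking the hypotheses of that theorem and then expanding the duality pairings appearing in the Lagrangian.

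First I would verify the hypotheses of Theorem \ref{existence-regular} for the reformulated problem (\ref{p-6}). Lemma \ref{deriJ} gives that $J$ is Fr\'echet differentiable at $\hat{x}$, Lemma \ref{deriF} gives that $F$ is continuously Fr\'echet differentiable at $\hat{x}$, and Lemma \ref{lem_regular} gives that $\hat{x}$ is a regular point in the sense of (\ref{p-4}). Hence Theorem \ref{existence-regular} applies and produces a nonempty set of Lagrange multipliers at $\hat{x}$, that is, an element of $\mathbb{Y}'$ fulfilling (\ref{p-3}). Since $\mathbb{Y}=L^2(D(A)')\times V$ with both factors reflexive ($D(A)$ and $V$ being Hilbert spaces), we identify $\mathbb{Y}'=L^2(D(A))\times V'$, so the multiplier has the asserted form $(\lambda,\eta)\in L^2(D(A))\times V'$; moreover the complementarity relation in (\ref{p-3}) holds automatically, because $\hat{x}\in\mathcal{S}_{ad}$ forces $F(\hat{x})=0$.

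It then remains to transcribe the variational inequality in (\ref{p-3}) into concrete form. Because $\mathcal{M}=\mathbb{W}\times\mathcal{U}$ and $\mathbb{W}$ is a linear space, the conical hull splits as $\mathcal{C}(\hat{x})=\mathbb{W}\times\mathcal{C}(\hat{f})$, so the admissible directions are exactly $r=(w,z)\in\mathbb{W}\times\mathcal{C}(\hat{f})$. Substituting the Lagrangian (\ref{p-7}) into $L'(\hat{x},\lambda,\eta)[r]\ge 0$ and inserting the explicit derivatives, namely $J'(\hat{x})[r]$ from (\ref{p-8}), $F_1'(\hat{x})[r]=\Delta_\alpha w_t+\nu\Delta_\alpha Aw+B'(\hat{u},\hat{u})w-z$ and $F_2'(\hat{x})[r]=w(0)$ from (\ref{p-9}), and rewriting the $L^2(D(A)')$--$L^2(D(A))$ pairing as the time integral of the $\langle\cdot,\cdot\rangle_{D(A)',D(A)}$ pairing, reproduces precisely (\ref{p-11}). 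The only delicate point is bookkeeping rather than analysis: one must respect the order of the arguments inside the brackets as fixed in (\ref{p-7}) and carry the two minus signs through, so that the signs of the constraint and initial-condition terms in (\ref{p-11}) come out correctly. No new estimate is required, all the PDE content (Fr\'echet differentiability of the trilinear operator $B$, and solvability of the linearized state equation (\ref{p-10}) underlying the regularity of $\hat{x}$) having already been discharged in Lemmas \ref{deriJ}--\ref{lem_regular}.
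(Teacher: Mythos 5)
Your proposal is correct and follows exactly the route of the paper's own proof: invoke Lemma \ref{lem_regular} to see that $\hat{x}$ is a regular point, apply Theorem \ref{existence-regular} (with the differentiability furnished by Lemmas \ref{deriJ} and \ref{deriF}) to obtain $(\lambda,\eta)\in L^2(D(A))\times V'$ satisfying the abstract inequality (\ref{p-3}), and then substitute the explicit derivatives (\ref{p-8})--(\ref{p-9}) to recover (\ref{p-11}). Your added remarks on the identification of $\mathbb{Y}'$ and on the splitting $\mathcal{C}(\hat{x})=\mathbb{W}\times\mathcal{C}(\hat{f})$ are details the paper leaves implicit, but the argument is the same.
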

\begin{proof}
From Lemma \ref{lem_regular}, we have that $\hat{x}=(\hat{u},\hat{f})\in\mathcal{S}_{ad}$ is a regular point. Thus, from Theorem \ref{existence-regular}
we deduce that there exists a Lagrange multiplier $(\lambda,\eta)\in L^2(D(A))\times V'$ such that
\begin{equation}\label{p-12}
L'(\hat{x},\lambda,\eta)[r]=J'(\hat{x})[r]-\langle F_1'(\hat{x}),\lambda\rangle_{L^2(D(A)'),L^2(D(A))}-\langle\eta,F_2'(\hat{x})\rangle_{V',V}\ge 0,
\end{equation}
for all $r=(w,z)\in \mathbb{W}\times\mathcal{C}(\hat{f})$. Therefore, the proof follows from (\ref{p-8}), (\ref{p-9}) and (\ref{p-12}).
\end{proof}

From Theorem \ref{mult} we can derive an optimality system for optimal control problem (\ref{p-6}); for which we consider the following linear space
\begin{equation}\label{p-13}
\mathbb{W}_{u_0}:=\{u\in \mathbb{W}\,:\, u(0)=0\}.
\end{equation}

\begin{corollary}\label{corol-1}
Let $\hat{x}=(\hat{u},\hat{f})\in\mathcal{S}_{ad}$ be a local optimal solution for the optimal control problem (\ref{p-6}). Then, the Lagrange multiplier
$\lambda\in L^2(D(A))$ provided by Theorem \ref{mult} satisfied the adjoint system
\begin{equation}\label{adj-1}
\left\{
\begin{array}{rcl}
-\Delta_\alpha\lambda_t+\nu\Delta_\alpha A\lambda-\hat{u}\cdot\nabla\lambda&+&\alpha^2\Delta(\hat{u}\cdot\nabla\lambda+\lambda\cdot\nabla\hat{u})
-(\nabla\lambda)^*\Delta_\alpha\hat{u}+\alpha^2\lambda\cdot\nabla(\Delta\hat{u})\\
&=&-\gamma_u\Delta A(\hat{u}-u_d)\ \mbox{ in }\ L^2(D(A)'),\\
{\rm div}\,\lambda&=&0\ \mbox{ in }\ Q,\\
\lambda&=&0\ \mbox{ on }\ \Gamma\times(0,T),\\
\Delta_\alpha\lambda(T)&=&\gamma_T(\hat{u}(T)-u_T)\ \mbox{ in }\ \Omega,
\end{array}
\right.
\end{equation}
and the optimality condition
\begin{equation}\label{adj-2-2}
\int_0^T(\gamma_f\hat{f}+\lambda,f-\hat{f})\ge 0\  \ \forall f\in\mathcal{U}.
\end{equation}
\end{corollary}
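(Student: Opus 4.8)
The plan is to start from the variational inequality (\ref{p-11}) of Theorem \ref{mult} and recover (\ref{adj-1})--(\ref{adj-2-2}) by making special choices of the test direction $(w,z)\in\mathbb{W}\times\mathcal{C}(\hat f)$. Since $0\in\mathcal{C}(\hat f)$, I first set $z=0$ and restrict $w$ to the linear subspace $\mathbb{W}_{u_0}$ introduced in (\ref{p-13}); then $w(0)=0$ annihilates the term $\langle\eta,w(0)\rangle_{V',V}$, and because $\mathbb{W}_{u_0}$ is a vector space I may replace $w$ by $-w$, so that (\ref{p-11}) collapses into the \emph{equality}
\begin{equation*}
\gamma_u\int_0^T(Aw,A\hat u-Au_d)\,dt+\gamma_T(w(T),\hat u(T)-u_T)-\int_0^T\langle\Delta_\alpha w_t+\nu\Delta_\alpha Aw+B'(\hat u,\hat u)w,\lambda\rangle_{D(A)',D(A)}\,dt=0
\end{equation*}
valid for every $w\in\mathbb{W}_{u_0}$.

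The next step is to transpose onto $\lambda$ each operator acting on $w$. Since $\Delta_\alpha$ is self-adjoint, an integration by parts in time yields $-\int_0^T\langle\Delta_\alpha w_t,\lambda\rangle\,dt=\int_0^T\langle w,\Delta_\alpha\lambda_t\rangle\,dt-\langle w(T),\Delta_\alpha\lambda(T)\rangle$, the endpoint at $t=0$ being absent because $w(0)=0$. Self-adjointness of $A$ and of $\Delta_\alpha A$ converts $\nu(Aw,\cdot)$ and $\gamma_u(Aw,A\hat u-Au_d)$ into the pairings of $w$ against $\nu\Delta_\alpha A\lambda$ and $\gamma_u A^2(\hat u-u_d)$, the latter being identified with $-\gamma_u\Delta A(\hat u-u_d)$ when tested against divergence-free fields. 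The genuinely delicate computation is the transposition of $B'(\hat u,\hat u)w=B(\hat u,w)+B(w,\hat u)$: rewriting $v-\alpha^2\Delta v=\Delta_\alpha v$ in (\ref{eq8}) and combining the antisymmetry relations (\ref{eq7})--(\ref{eq9}) with spatial integrations by parts that move $\nabla$ and $\Delta$ off $w$ and onto $\lambda$, one should recover exactly the adjoint convective terms $-\hat u\cdot\nabla\lambda+\alpha^2\Delta(\hat u\cdot\nabla\lambda+\lambda\cdot\nabla\hat u)-(\nabla\lambda)^*\Delta_\alpha\hat u+\alpha^2\lambda\cdot\nabla(\Delta\hat u)$ displayed in (\ref{adj-1}).

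Once everything is transposed, I would first test with those $w\in\mathbb{W}_{u_0}$ that additionally satisfy $w(T)=0$; the endpoint contributions then disappear and the remaining identity $\int_0^T\langle w,(\,\cdots)\rangle_{D(A)',D(A)}\,dt=0$ holds on a dense set, forcing the bracket to vanish and producing the adjoint equation (\ref{adj-1})$_1$ in $L^2(D(A)')$, with (\ref{adj-1})$_2$--(\ref{adj-1})$_3$ encoded in the space $D(A)$. Returning to arbitrary $w\in\mathbb{W}_{u_0}$ with $w(T)$ free, only the endpoint terms remain, so $\gamma_T(w(T),\hat u(T)-u_T)-\langle w(T),\Delta_\alpha\lambda(T)\rangle=0$ for all admissible $w(T)$, which gives the terminal condition (\ref{adj-1})$_4$. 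Finally, for (\ref{adj-2-2}) I take $w=0$ in (\ref{p-11}) and $z=f-\hat f\in\mathcal{C}(\hat f)$ for arbitrary $f\in\mathcal{U}$; all state-dependent terms drop and the inequality reduces to $\int_0^T(\gamma_f\hat f+\lambda,f-\hat f)\,dt\ge0$.

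I expect the main obstacle to be twofold and essentially technical. First, the time integration by parts has to be justified: this requires knowing that $\lambda$ possesses a time derivative in a suitable negative space and is continuous in time, so that $\Delta_\alpha\lambda(T)$ is meaningful. This is obtained by reading (\ref{adj-1}) as a backward-in-time linear parabolic problem and invoking the Galerkin-plus-energy-estimate machinery already used in Theorem \ref{teor2} for the linearized operator. Second, the spatial transposition of $B'(\hat u,\hat u)$ must be performed with care, since each of $B(\hat u,w)$ and $B(w,\hat u)$ contributes both a convective and a $(\nabla\cdot)^*$ piece, and the $\alpha^2\Delta$ corrections demand integrating by parts twice while tracking every boundary term; these vanish precisely because $w,\lambda\in D(A)$ and satisfy $w=\lambda=0$, $Aw=A\lambda=0$ on $\Gamma$.
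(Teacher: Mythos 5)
Your overall strategy coincides with the paper's: take $w=0$ and $z=f-\hat f$ to obtain (\ref{adj-2-2}); take $z=0$ and exploit that $\mathbb{W}_{u_0}$ is a vector space to turn the inequality (\ref{p-11}) into an equality; transpose the linear terms and $B'(\hat u,\hat u)$ onto $\lambda$; and separate the interior equation from the terminal condition through the arbitrariness of $w(T)$. The one place where your argument has a genuine gap is the justification of the temporal manipulation. You propose to integrate by parts in time, which presupposes that $\lambda$ (a priori only in $L^2(D(A))$, as furnished by Theorem \ref{mult}) has a time derivative in a negative space and a meaningful value at $t=T$, and you suggest securing this by solving (\ref{adj-1}) as a backward linear parabolic problem via Galerkin approximations. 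As written this is circular: the multiplier $\lambda$ is the abstract object supplied by Theorem \ref{mult}, and constructing \emph{some} solution of the backward system does not show that this particular $\lambda$ is that solution --- that is exactly the conclusion of the corollary. To identify the two you would additionally need a uniqueness theorem for the backward adjoint problem, which you neither state nor prove.

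The paper avoids the issue entirely, and the mechanism is worth internalizing. No integration by parts in time is ever performed against $\lambda$: the term $\langle\Delta_\alpha w_t,\lambda\rangle_{D(A)',D(A)}$ is transposed in space only, yielding $(\Delta_\alpha\lambda,w_t)$, which is well defined because test functions $w\in\mathbb{W}$ satisfy $w_t\in L^2(H)$ by the very definition (\ref{space_W}). The crux is then the regularity statement (\ref{eq70}): via the H\"older--Sobolev duality estimates (\ref{eq65})--(\ref{eq69}) one shows $B'^*(\hat u,\hat u)\lambda\in L^2(D(A)')$, so the identity (\ref{ad-5}) says precisely that $\Delta_\alpha\lambda$ admits a distributional time derivative lying in $L^2(D(A)')$; this, in turn, is what makes the trace $\Delta_\alpha\lambda(T)$ meaningful and legitimizes reading off the terminal condition from the arbitrariness of $w(T)$. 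So the missing ingredient in your write-up is not backward solvability theory but the explicit estimates on $B'^*(\hat u,\hat u)\lambda$: the step you deferred as ``spatial transposition performed with care'' is exactly where the needed time regularity of $\lambda$ is generated, a posteriori, from the variational identity itself. The remainder of your computation (the explicit form of the adjoint convective terms, the identification of $\gamma_u A^2(\hat u-u_d)$ with $-\gamma_u\Delta A(\hat u-u_d)$ modulo gradients, and the extraction of (\ref{adj-2-2})) matches the paper's derivation.
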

\begin{proof}
Taking $w=0$ in (\ref{p-11}) we have
\begin{equation}\label{adj-3} 
\gamma_f\int_0^T(\hat{f}+\lambda,z)dt\ge0\  \ \forall z\in\mathcal{C}(\hat{f}).
\end{equation}
Then, choosing $z=f-\hat{f}\in \mathcal{C}(\hat{f})$, for all $f\in\mathcal{U}$ in (\ref{adj-3}) we obtain (\ref{adj-2-2}).

Now, we will derive system (\ref{adj-1}). Indeed, taking $z=0$ in (\ref{p-11}) and using that $\mathbb{W}_{u_0}$ is a vector space, we have
\begin{equation}\label{ad}
\int_0^T\langle\Delta_\alpha w_t+\nu\Delta_\alpha Aw+B'(\hat{u},\hat{u})w,\lambda\rangle_{D(A)',D(A)}dt\nonumber\\
=\gamma_u\int_0^T(Aw,A\hat{u}-Au_d)dt+\gamma_T(w(T),\hat{u}(T)-u_T)\ \forall w\in\mathbb{W}_{u_0}.
\end{equation}
Integrating by parts in $\Omega$, we have
\begin{eqnarray}
\langle\Delta_\alpha w_t,\lambda\rangle_{D(A)',D(A)}&=&(w_t,\lambda)+\alpha^2(\nabla w_t,\nabla\lambda)=(w_t,\lambda)-\alpha^2(w_t,\Delta\lambda)
=(\Delta_\alpha\lambda,w_t),\label{ad-1}\\
\langle\nu\Delta_\alpha Aw,\lambda\rangle_{D(A)',D(A)}&=&\nu(Aw,\lambda)-\alpha^2\nu\langle\Delta Aw,\lambda\rangle_{D(A)'}=\nu(Aw,\lambda)-\alpha^2\nu(Aw,\Delta\lambda)\nonumber\\
&=&\langle\nu A\Delta_\alpha\lambda,w\rangle_{D(A)',D(A)}\label{ad-2}.
\end{eqnarray}
Taking into account that $A=-P\Delta$, we obtain
\begin{equation}\label{ad-3}
\gamma_u\int_0^T(Aw,A\hat{u}-Au_d)dt=-\gamma_u\int_0^T(\Delta w,A(\hat{u}-u_d))dt=-\gamma_u\int_0^T\langle\Delta A(\hat{u}-u_d),w\rangle_{D(A)',D(A)}dt.
\end{equation}
Since $B'(\hat{u},\hat{u}):\mathbb{W}_0\rightarrow L^2(D(A)')$ and $\mathbb{W}_0\subset\mathbb{W}$, then  the adjoint operator of $B'(\hat{u},\hat{u})$ is given by
\begin{equation}\label{ad-4}
\langle B'^*(\hat{u},\hat{u}\lambda,w)\rangle_{\mathbb{W}_0',\mathbb{W}}:=\langle B'(\hat{u},\hat{u})w,\lambda\rangle_{L^2(D(A)'),L^2(D(A))}.
\end{equation}

Then, by replacing (\ref{ad-1})-(\ref{ad-3}) in (\ref{ad}) and taking into account (\ref{ad-4}), we obtain
\begin{eqnarray}
\langle\Delta_\alpha\lambda,w_t\rangle_{L^2(D(A)'),L^2(D(A))}&=&-\langle\nu A\Delta_\alpha\lambda,w\rangle_{L^2(D(A)'),L^2(D(A))}-\langle B'^*(\hat{u},\hat{u})\lambda,w\rangle_{\mathbb{W}_0',\mathbb{W}_0}\nonumber\\
&=&-\gamma_u\langle\Delta A(\hat{u}-u_d),w\rangle_{L^2(D(A)'),L^2(D(A))}+\gamma_T(w(T),\hat{u}(T)-u_T).\label{ad-5}
\end{eqnarray}
In order to obtain a representation of the weak derivative in time of $\Delta_\alpha\lambda$ we will analyze the regularity of $B'^*(\hat{u},\hat{u})\lambda$. Indeed, notice that from
(\ref{eq8}) and (\ref{eq7}) we have
\begin{eqnarray}\label{eq61}
\hspace{-0.8cm}\langle B'(\hat{u},\hat{u})w,\lambda
\rangle_{D(A)',L^2(D(A))}&=&\langle\hat{u}\cdot\nabla \Delta_\alpha w,
\lambda\rangle_{V',V}-\alpha^2((\nabla \hat{u})^*\cdot \Delta w, \lambda)\nonumber \\
&&+\langle w\cdot\nabla\Delta_\alpha\hat{u}, \lambda\rangle_{V',V}
-\alpha^2((\nabla w)^*\cdot \Delta\hat{u}, \lambda)\nonumber\\
&=&-(\hat{u}\cdot\nabla \lambda,\Delta_\alpha w)
-\alpha^2(\lambda \cdot\nabla \hat{u}, \Delta w)-(w\cdot\nabla \lambda,\Delta_\alpha \hat{u}) -\alpha^2(\lambda\cdot\nabla w, \Delta\hat{u}).
\end{eqnarray}
We will bound the terms in (\ref{eq61}). From H\"older and Sobolev inequalities we obtain
\begin{eqnarray}
|(\hat{u}\cdot\nabla \lambda,\Delta_\alpha w)|&\leq &
C\|\hat{u}\|_{L^6}\|\nabla \lambda\|_{L^3}\|\Delta_\alpha w\|
\leq C\|\hat{u}\|_V \|\lambda\|_{D(A)} \|w\|_{D(A)},\label{eq65}\\
|(\lambda \cdot\nabla \hat{u}, \Delta w)| &\leq &
C\|\lambda\|_{L^\infty}\|\nabla \hat{u}\|\| \Delta w\| \leq
C\|\lambda\|_{D(A)}\|\hat{u}\|_V \|w\|_{D(A)}.\label{eq66}
\end{eqnarray}
By observing that $ h\cdot \nabla v =0$ on $\Gamma$ if $(h,v) \in D(A)\times D(A)$, and using integration by parts on $\Omega$, for $h, v, z \in
D(A)$ we have
\begin{equation}\label{eq67}
(h\cdot \nabla v, \Delta z)= (\nabla (h\cdot \nabla v), \nabla z)
=(\nabla v  \nabla h, \nabla z) + (h  \nabla(\nabla v), \nabla z),
\end{equation}
where $h  \nabla(\nabla v)=\sum_{i=1}^3h_i\frac{\partial}{\partial
x_i}\nabla v.$ Then, from (\ref{eq67}), the fact that $\|\nabla
v\|_{L^4}\leq C\|v\|_{D(A)}$ and $D(A) \subset L^\infty(\Omega)$, we
obtain
\begin{eqnarray}
|(w\cdot\nabla \lambda,\Delta_\alpha \hat{u})|&=&
|(w\cdot\nabla \lambda,\hat{u})-\alpha^2 (w\cdot\nabla \lambda,\Delta \hat{u})|
\leq  |(w\cdot\nabla \lambda,\hat{u})| +\alpha^2 |(\nabla\lambda\cdot\nabla w,\nabla \hat{u})|
+\alpha^2 |(w\cdot \nabla(\nabla \lambda),\nabla \hat{u})|\nonumber \\
&\leq & C\|w\|_{L^3}\|\nabla \lambda\|\|\hat{u}\|_{L^6} +C(\|\nabla\lambda\|_{L^4}\|\nabla h\|_{L^4}
+\|w\|_{L^\infty}\| \nabla(\nabla \lambda)\|)\|\nabla \hat{u}\|\nonumber \\
&\leq &C \|h\|_{D(A)}\|\lambda\|_{D(A)}\|\hat{u}\|_V,\label{eq68}\\
|(\lambda\cdot\nabla w,\Delta\hat{u})| &\leq & |(\nabla w\cdot\nabla
\lambda,\nabla \hat{u})|
+|( \lambda\cdot\nabla(\nabla w),\nabla \hat{u})|
\leq  C(\|\nabla w\|_{L^4}\|\nabla \lambda\|_{L^4}
+\|\lambda\|_{L^\infty}\| \nabla(\nabla w)\|)\|\nabla \hat{u}\|\nonumber\\
&\leq &C \|w\|_{D(A)}\|\lambda\|_{D(A)}\|\hat{u}\|_V.\label{eq69}
\end{eqnarray}
From (\ref{ad-4}), (\ref{eq61})-(\ref{eq66}), (\ref{eq68}) and
(\ref{eq69}), and by using the H\"older inequality, for $\lambda \in
L^2(D(A))$, $\hat{u}\in \mathbb{W}$ and $w \in \mathbb{W}_0$, we have
$
|\langle B'^*(\hat{u}, \hat{u})\lambda,
h\rangle_{\mathbb{W}'_0,\mathbb{W}_0}|\leq C
\|\hat{u}\|_{L^\infty(V)} \|\lambda\|_{L^2(D(A))}\|w\|_{L^2(D(A))},
$
which implies
\begin{equation}\label{eq70}
B'^*(\hat{u}, \hat{u})\lambda \in L^2(D(A)').
\end{equation}
Then, for all $w\in \mathbb{W}_0$ we can rewrite (\ref{ad-5}) as the
following equality
\begin{eqnarray*}
(\Delta_\alpha\lambda,w_t)_{L^2(D(A)'),L^2(D(A))}&=&\langle -\nu
A\Delta_\alpha \lambda - B'^*(\hat{u}, \hat{u})\lambda -
\gamma_u\Delta
A(\hat{u}-u_d),w\rangle_{L^2(D(A)'),L^2(D(A))}\\
&&+\gamma_T(w(T),\hat{u}(T)-u_T).
\end{eqnarray*}
Since $w(T)$ is  arbitrary, as
$\Delta_\alpha\lambda(T)=\gamma_T(\hat{u}(T)-u_T),$ we have the
existence of a representation of $\Delta_\alpha \lambda_t$ in a
distributional sense as being
$$\Delta_\alpha \lambda_t=\nu A\Delta_\alpha
\lambda+B'^*(\hat{u}, \hat{u})\lambda+\gamma_1 \Delta
A(\hat{u}-u_d).$$ 
Thus we obtain that $\lambda \in L^2(D(A))$ is a
solution of system
\begin{equation}
\left\{
\begin{array}{rcl}
\Delta_\alpha\lambda_t-\nu A\Delta_\alpha \lambda-B'^*(\hat{u},
\hat{u})\lambda&=&\gamma_u \Delta A(\hat{u}-u_d)\ \mbox{in}\ L^2(D(A)'),\label{e17}\\
\Delta_\alpha\lambda(T)&=&\gamma_T(\hat{u}(T)-u_T).
\end{array}
\right.
\end{equation}
Moreover, from (\ref{ad-4}), (\ref{eq61}) and (\ref{eq70}) we have
\begin{eqnarray}\label{eq75}
\langle B'^*(\hat{u}, \hat{u})\lambda,w\rangle_{D(A)',D(A)}&=&-(\hat{u}\cdot\nabla \lambda,w)
+\alpha^2(\hat{u}\cdot\nabla \lambda,\Delta w)
-\alpha^2(\lambda \cdot\nabla \hat{u}, \Delta h)\nonumber \\
&&-(w\cdot\nabla \lambda, \Delta_\alpha \hat{u}) +\alpha(\lambda\cdot\nabla \Delta\hat{u},w).
\end{eqnarray}
Observing that $ v\cdot \nabla h =0$ on $\Gamma$ if $v, h\in D(A)$,
and using integration by parts on $\Omega$, for $\lambda, w \in D(A)$ we obtain
\begin{eqnarray}\label{eq76}
\alpha^2(\hat{u}\cdot\nabla \lambda,\Delta w)- \alpha^2 (\lambda\cdot\nabla \hat{u}, \Delta w) &=&-\alpha^2(\nabla(\hat{u}\cdot\nabla
\lambda),\nabla w) +\alpha^2(\nabla(\lambda\cdot\nabla\hat{u}),\nabla w)\nonumber\\
&=&\alpha^2(\Delta(\hat{u}\cdot\nabla\lambda),w)-\alpha^2(\Delta(\lambda\cdot\nabla\hat{u}), w).
\end{eqnarray}
Taking into account (\ref{eq7}), we have
\begin{equation}\label{eq77}
-(w\cdot\nabla \lambda, \Delta_\alpha \hat{u}) =\langle w\cdot\nabla
\Delta_\alpha \hat{u},\lambda\rangle_{V',V} =-((\nabla\lambda)^*\cdot \Delta_\alpha \hat{u},w).
\end{equation}
Thus, from (\ref{eq75})-(\ref{eq77}) we obtain
\begin{eqnarray*}
\langle B'^*(\hat{u}, \hat{u})\lambda, w\rangle_{D(A)',D(A)}
&=&\langle-\hat{u}\cdot\nabla \lambda+\alpha^2\Delta(\hat{u}\cdot\nabla\lambda)
 -\alpha^2 \Delta(\lambda\cdot\nabla \hat{u}), w\rangle_{D(A)',D(A)}\\
&&-\langle (\nabla \lambda)^*\cdot \Delta_\alpha \hat{u}
+\alpha^2\lambda \cdot\nabla \Delta\hat{u},w\rangle_{D(A)',D(A)},
\end{eqnarray*}
which implies that the following equality as sense in $L^2(D(A))'$
\begin{equation}\label{eq78}
B'^*(\hat{u}, \hat{u})\lambda=-\hat{u}\cdot\nabla \lambda
+\alpha^2\Delta(\hat{u}\cdot\nabla \lambda) -\alpha^2(\lambda\cdot\nabla
\hat{u}) -(\nabla \lambda)^*\cdot \Delta_\alpha \hat{u}
+\alpha^2\lambda \cdot\nabla \Delta\hat{u}.
\end{equation}
Consequently, from (\ref{e17}) and (\ref{eq78}), we deduce system (\ref{adj-1}).

\end{proof}
Summarizing the state equation (\ref{eq17}), the adjoint equation
(\ref{adj-1}) and the optimality condition (\ref{adj-2-2}) we get the
optimality system.

\begin{remark}
Since $\mathcal{U}$ is a closed and convex set in the Hilbert spaces $L^2(Q)$, then from optimality condition (\ref{adj-2-2}) and \cite[Theorem 5.2, p. 132]{brez} we deduce that
the control $\hat{f}$ can be characterized as the projection of Lagrange multiplier $\lambda$ onto $\mathcal{U}$, that is,
\begin{equation}\label{proj}
\hat{f}=\mathop{\rm Proj}\limits_{\mathcal{U}}\left(-\frac{1}{\gamma_f}\lambda\right)\ \mbox{ a.e. in }\ Q.
\end{equation}
\end{remark}

\section{Relationship between the optimality systems of Navier-Stokes-$\alpha$ and Navier-Stokes models}
In \cite{Casas2}, the authors studied a velocity tracking control problem associated with the non-stationary Navier-Stokes equations for three-dimensional flows. In the classical tracking control problem, the cost functional involves the $L^2$-norm of 
$u-u_d,$ but unlike the $2D$ case, the $3D$ version is much more complicated due to the lack of uniqueness of weak solutions, or the existence of strong solutions. Therefore, instead of considering the $L^2$-norm of the cost functional, in \cite{Casas2} the authors considered
\begin{equation}\label{casas_func}
J_0(u,f):=\displaystyle\frac{\gamma_u}{2}\int_0^T\|u(t)-u_d(t)\|^8_{L^4}dt+\displaystyle\frac{\gamma_T}{2}\int_\Omega|u(x,T)-u_T(x)|^2dx
+\displaystyle\frac{\gamma_f}{2}\int_0^T\|f(t)\|^2dt.
\end{equation}
Then, it is possible to minimize $J_0$ in a class of functions which $(u,f)$ satisfies the Navier-Stokes system (\ref{eq0a}). Indeed, if $u$ is a weak solution of (\ref{eq0a}) such that $J_0(u,f)<\infty,$ then $u$ is a strong solution. With this formulation, the authors in \cite{Casas2} proved that there exists an optimal solution and analyzed first and second optimality conditions.

In this section, we are interested in to analyze the convergence of the optimality system of the optimal control problem associated to the Navier-Stokes-$\alpha$ system as $\alpha\rightarrow 0^+,$ and relate the limit to the corresponding optimality system of the optimal control problem with state equations (\ref{eq0a}) and cost functional (\ref{casas_func}). For that, we consider the following optimal control problem associated to the Navier-Stokes-$\alpha$ system:
\begin{equation}\label{eq25}
\left\{
\begin{array}{l}
\mbox{Find $(u,f)\in \mathbb{W}_{u_0}\times \mathcal{U}$ such that:}\\
J_0(u,f):=\displaystyle\frac{\gamma_u}{2}\int_0^T\|u(t)-u_d(t)\|^8_{L^4}+\displaystyle\frac{\gamma_T}{2}\int_\Omega|u(x,T)-u_T(x)|^2dx+\displaystyle\frac{\gamma_f}{2}\int_0^T\|f(t)\|^2dt\\
\mbox{is minimized, subject to $(u,f)$ being a weak solution of  (\ref{eq11}).}
\end{array}
\right.
\end{equation}
As in Section 3, the pair $(u_d,u_T)\in D(A)\times H$ represents the desires states and the nonnegative real numbers $\gamma_u$, $\gamma_T$ and $\gamma_f$ measure 
the cost of the states and control, respectively. These numbers are non zero simultaneously. The functional $J_0$ describes the
deviation of the velocities field from a desired field $u_d$, and the deviation of the velocities field in the final time $T$ from a desired field $u_T$, plus
the cost of the control $f$ measured in the $L^2$-norm.

In \cite{lans7} the authors investigated the convergence, as $\alpha\rightarrow 0^+,$ of the solutions of the Navier-Stokes-$\alpha$ equations to a weak solution of the Navier-Stokes equations (\ref{eq0a}). Here, we will analyze the convergence, as $\alpha\rightarrow 0^+,$  of the adjoint system associated to the optimal control problem (\ref{eq25}) and its relation with the corresponding adjoint system in the case of Navier-Stokes model established in \cite{Casas2}.

Following the same arguments provided in Sections 3 and 4, we get the following results:
\begin{theorem}\label{existence-optimal_new}
Let $u_0\in V$. We assume that either $\gamma_f>0$ or $\mathcal{U}$ is bounded in $L^2(Q).$ Then the extremal problem (\ref{eq25}) has at least one global optimal solution $(\hat{u},\hat{f})\in\mathcal{S}_{ad}$.
\end{theorem}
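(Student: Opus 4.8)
The plan is to follow verbatim the scheme of Theorem~\ref{existence-optimal}, since the state equation and the admissible set $\mathcal{S}_{ad}$ are unchanged and only the cost functional differs, the $D(A)$-tracking term being replaced by the higher-order $L^4$-term. First I would note that $\mathcal{S}_{ad}$ is nonempty by Theorem~\ref{teor2}, pick a minimizing sequence $\{(u^m,f^m)\}_{m\ge1}\subset\mathcal{S}_{ad}$ for $J_0$, and extract uniform bounds exactly as before: the hypothesis $\gamma_f>0$ or $\mathcal{U}$ bounded forces $\{f^m\}$ to be bounded in $L^2(Q)$, and then (\ref{bound_solution}) gives $\|u^m\|_{\mathbb{W}}\le C$ with $C$ independent of $m$. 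Using that $\mathcal{U}$ is closed and convex (hence weakly closed), I would pass to a subsequence with $u^m\rightharpoonup\hat{u}$ weakly in $\mathbb{W}$, $f^m\rightharpoonup\hat{f}$ weakly in $L^2(Q)$ and $\hat{f}\in\mathcal{U}$.

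Next, by the compact injection $\mathbb{W}\hookrightarrow C([0,T];V)$ of Remark~\ref{compact-injection} (together with $\mathbb{W}\hookrightarrow L^2(V)$) I obtain $u^m\to\hat{u}$ strongly in $L^2(V)\cap C([0,T];V)$; this yields $\hat{u}(0)=u_0$ and allows passage to the limit in the variational formulation (\ref{eq16}) written for $(u^m,f^m)$, so that $(\hat{u},\hat{f})\in\mathcal{S}_{ad}$. All of this is identical to the proof of Theorem~\ref{existence-optimal}.

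The only genuinely new point is the lower semicontinuity of $J_0$, specifically the behaviour of the term $\int_0^T\|u^m(t)-u_d(t)\|_{L^4}^8\,dt$. Here I would exploit the Sobolev embedding $V\hookrightarrow L^6(\Omega)\hookrightarrow L^4(\Omega)$ together with the strong convergence in $C([0,T];V)$: this gives $u^m\to\hat{u}$ strongly in $C([0,T];L^4)$, hence in $L^8(0,T;L^4)$ on the bounded interval $[0,T]$ via the continuous chain $C([0,T];V)\hookrightarrow C([0,T];L^4)\hookrightarrow L^8(0,T;L^4)$, so that the map $u\mapsto\int_0^T\|u-u_d\|_{L^4}^8\,dt$ is in fact \emph{continuous} along the sequence and that term converges. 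Likewise $u^m(T)\to\hat{u}(T)$ strongly in $H$ gives convergence of the final-observation term, while the control term $\tfrac{\gamma_f}{2}\int_0^T\|f\|^2\,dt$ is weakly lower semicontinuous from the weak convergence of $f^m$ in $L^2(Q)$. Combining these, $J_0(\hat{u},\hat{f})\le\liminf_{m\to\infty}J_0(u^m,f^m)$, which together with $(\hat{u},\hat{f})\in\mathcal{S}_{ad}$ yields optimality.

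The hard part will be confirming that the large exponent $8$ on the $L^4$-norm does not break the argument: one must verify that strong $C([0,T];L^4)$-convergence upgrades to convergence of the eighth powers in $L^1(0,T)$, e.g. through the elementary estimate $|\,\|a\|_{L^4}^8-\|b\|_{L^4}^8\,|\le C(\|a\|_{L^4}^7+\|b\|_{L^4}^7)\|a-b\|_{L^4}$ combined with the uniform-in-$t$ boundedness furnished by $C([0,T];V)$. Compared with the $D(A)$-tracking term of Theorem~\ref{existence-optimal}, which only required weak lower semicontinuity, the compactness of $\mathbb{W}\hookrightarrow C([0,T];V)$ here actually buys outright convergence of the tracking term, so no convexity argument on that term is needed.
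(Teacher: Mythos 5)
Your proposal is correct and takes essentially the same approach as the paper, which proves Theorem \ref{existence-optimal_new} simply by invoking the same arguments as Theorem \ref{existence-optimal}: minimizing sequence, uniform bounds on $\{f^m\}$ from $\gamma_f>0$ or boundedness of $\mathcal{U}$ together with (\ref{bound_solution}), weak closedness of $\mathcal{U}$, compactness from Remark \ref{compact-injection}, passage to the limit in (\ref{eq16}), and lower semicontinuity of the cost. Your extra step---upgrading the strong convergence in $C([0,T];V)$ through $V\hookrightarrow L^{6}(\Omega)\hookrightarrow L^{4}(\Omega)$ to outright convergence of $\int_0^T\|u^m(t)-u_d(t)\|_{L^4}^8\,dt$ via the elementary power estimate---correctly supplies the one detail specific to $J_0$ that the paper leaves implicit.
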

\begin{theorem}\label{mult2}
Let $\hat{x}=(\hat{u},\hat{f})\in\mathcal{S}_{ad}$ be a local optimal solution for problem (\ref{p-6}). Then, there exists
a Lagrange multiplier $\lambda\in L^2(D(A))$ which satisfy the adjoint system
\begin{equation}\label{adj-12}
\left\{
\begin{array}{rcl}
-\Delta_\alpha\lambda_t+\nu\Delta_\alpha A\lambda-\hat{u}\cdot\nabla\lambda&+&\alpha^2\Delta(\hat{u}\cdot\nabla\lambda+\lambda\cdot\nabla\hat{u})
-(\nabla\lambda)^*\Delta_\alpha\hat{u}+\alpha^2\lambda\cdot\nabla(\Delta\hat{u})\\
&=&-\gamma_u\Vert \hat{u}-u_d\Vert^4_{L^4}\vert \hat{u}-u_d\vert(\hat{u}-u_d)\ \mbox{ in }\ L^2(D(A)'),\\
{\rm div}\,\lambda&=&0\ \mbox{ in }\ Q,\\
\lambda&=&0\ \mbox{ on }\ \Gamma\times(0,T),\\
\Delta_\alpha\lambda(T)&=&\gamma_T(\hat{u}(T)-u_T)\ \mbox{ in }\ \Omega,
\end{array}
\right.
\end{equation}
and the optimality condition
\begin{equation}\label{adj-2}
\int_0^T(\gamma_f\hat{f}+\lambda,f-\hat{f})\ge 0\ \forall f\in\mathcal{U}.
\end{equation}
\end{theorem}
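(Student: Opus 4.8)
The plan is to mirror, essentially verbatim, the proofs of Theorem~\ref{mult} and Corollary~\ref{corol-1}. Problems~(\ref{eq24}) and~(\ref{eq25}) differ only in the velocity-tracking term of the cost functional, while the state equation, the constraint operator $F=(F_1,F_2)$ in~(\ref{p-5}), and its continuous Fr\'echet derivative~(\ref{p-9}) are identical; indeed, reformulating~(\ref{eq25}) in the abstract framework~(\ref{p-6}) amounts to replacing $J$ by $J_0$ while keeping $F$ and $\mathcal{M}=\mathbb{W}\times\mathcal{U}$ unchanged. Since $F$ is unchanged, Lemma~\ref{lem_regular} still guarantees that every $\hat{x}=(\hat{u},\hat{f})\in\mathcal{S}_{ad}$ is a regular point, so Theorem~\ref{existence-regular} again furnishes a Lagrange multiplier $\lambda\in L^2(D(A))$ (and $\eta\in V'$) satisfying the analogue of~(\ref{p-11}). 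The whole task therefore reduces to (i) recomputing the Fr\'echet derivative of the new functional $J_0$ and (ii) repeating the integration-by-parts identities that led from~(\ref{p-11}) to~(\ref{adj-1}).

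First I would verify that $J_0$ is Fr\'echet differentiable on $\mathbb{X}=\mathbb{W}\times L^2(Q)$. The terminal and control terms are quadratic and already handled in Lemma~\ref{deriJ}; only the term $\frac{\gamma_u}{2}\int_0^T\|u-u_d\|_{L^4}^8\,dt$ is new. Writing it as $\frac{\gamma_u}{2}\int_0^T I(t)^2\,dt$ with $I(t)=\|u(t)-u_d(t)\|_{L^4}^4=\int_\Omega|u-u_d|^4\,dx$, the chain rule combined with $\frac{d}{ds}\int_\Omega|u+sw-u_d|^4\,dx\big|_{s=0}=4\int_\Omega|u-u_d|^2(u-u_d)\cdot w\,dx$ produces, after collecting constants, precisely the nonlinear source term displayed on the right-hand side of the first equation in~(\ref{adj-12}), in place of the linear term $-\gamma_u\Delta A(\hat{u}-u_d)$ of~(\ref{adj-1}). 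The admissibility of these manipulations and the continuity of the derivative rest on the embeddings $\mathbb{W}\hookrightarrow L^\infty(V)\hookrightarrow L^\infty(L^6)\hookrightarrow L^\infty(L^4)$ together with sufficient integrability of $u_d$, which make $u\mapsto\int_0^T\|u-u_d\|_{L^4}^8\,dt$ well defined and continuously Fr\'echet differentiable on $\mathbb{W}$.

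With the multiplier and the derivative $J_0'(\hat{x})$ in hand, I would split the resulting variational inequality exactly as in Corollary~\ref{corol-1}. Choosing $w=0$ and $z=f-\hat{f}\in\mathcal{C}(\hat{f})$ yields the optimality condition~(\ref{adj-2}). Choosing $z=0$ and letting $w$ range over the vector space $\mathbb{W}_{u_0}$ of~(\ref{p-13}), the identities~(\ref{ad-1})--(\ref{ad-2}) for $\langle\Delta_\alpha w_t,\lambda\rangle$ and $\langle\nu\Delta_\alpha Aw,\lambda\rangle$, the computation~(\ref{ad-4})--(\ref{eq78}) of the adjoint operator $B'^{*}(\hat{u},\hat{u})\lambda$, and the regularity bound~(\ref{eq70}) all transfer unchanged, since they involve only $\hat{u}$, $\lambda$ and $w$ and not the cost functional. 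Using the arbitrariness of $w(T)$ one reads off the terminal condition $\Delta_\alpha\lambda(T)=\gamma_T(\hat{u}(T)-u_T)$, and collecting the interior equation gives~(\ref{adj-12}).

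The main obstacle, indeed the only genuinely new point, is the analysis of the tracking term: establishing its continuous Fr\'echet differentiability on $\mathbb{W}$ and verifying that the nonlinear source it generates, displayed on the right-hand side of the first equation in~(\ref{adj-12}), belongs to $L^2(D(A)')$ so that~(\ref{adj-12}) lives in the same functional setting as~(\ref{adj-1}). This regularity follows from $\hat{u}\in L^\infty(V)\hookrightarrow L^\infty(L^6)$ together with the corresponding integrability of $u_d$: the source is a fixed power of $(\hat{u}-u_d)$ multiplied by the bounded scalar factor $\|\hat{u}-u_d\|_{L^4}^4$, hence lies in $L^\infty(0,T;H)\hookrightarrow L^2(0,T;H)$, and since $H\hookrightarrow D(A)'$ this gives membership in $L^2(D(A)')$. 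Once this regularity is secured, the remaining steps reproduce verbatim the calculations already carried out in Section~4.
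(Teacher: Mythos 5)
Your proposal takes essentially the same route as the paper, which likewise reduces Theorem~\ref{mult2} to the machinery of Theorem~\ref{mult} and Corollary~\ref{corol-1}, noting only that $J_0$ is Fr\'echet differentiable with the new tracking-term derivative in place of the $D(A)$-norm one; your added verification that the nonlinear source belongs to $L^2(D(A)')$ via $\hat u\in L^\infty(V)\hookrightarrow L^\infty(L^6)$ is a detail the paper leaves implicit. One caveat: your (correct) chain-rule computation yields the source $4\gamma_u\,\Vert \hat u-u_d\Vert_{L^4}^4\,\vert \hat u-u_d\vert^2(\hat u-u_d)$, whereas the paper's display reads $\gamma_u\,\Vert \hat u-u_d\Vert_{L^4}^4\,\vert \hat u-u_d\vert(\hat u-u_d)$, so the exact match you assert holds only up to what appears to be a typographical slip in the paper (the exponent on $\vert \hat u-u_d\vert$ and the factor $4$), not merely ``collecting constants.''
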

\begin{proof}
The proof follows the same spirit of the proof of Theorem \ref{adj-1}, noting that the functional $J_0$ is Fr\'echet differentiable and the Fr\'echet drivative of $J_0$ in  
$\hat{x}=(\hat{u},\hat{f})\in\widetilde{\mathbb{X}}:=\mathbb{W}_{u_0}\times L^2(Q)$ in
the direction ${r}=(w,z) \in \widetilde{\mathbb{X}}$ is 
\begin{equation}\label{p-8}
J_0'(\hat{x})[r]=\gamma_u\int_0^T (\Vert \hat{u}-u_d\Vert^4_{L^4}\vert \hat{u}-u_d\vert(\hat{u}-u_d),w)dt+\gamma_T(w(T),\hat{u}(T)-u_T)
+\gamma_f\int_0^T(\hat{f},z)dt.
\end{equation}
\end{proof}
Now we derive some uniform estimates of the solution of the adjoint system (\ref{adj-12}). For that, testing (\ref{adj-12})$_1$ by $\lambda$,  using the H\"older, Young and interpolation inequalities, we get:
\begin{eqnarray}\label{ee1}
\langle -\Delta_\alpha\lambda_t,\lambda\rangle_{D(A)',D(A)}
&=&-(\lambda_t,\lambda)+\alpha^2(\Delta \lambda_t,\lambda)=-\frac{1}{2}\frac{d}{dt}\left\{ \Vert \lambda\Vert^2+\alpha^2\Vert \nabla\lambda\Vert^2\right\},\label{ee2}\\
\langle \nu\Delta_\alpha A\lambda,\lambda\rangle_{D(A)',D(A)}
&=&\nu(A\lambda,\lambda)-\nu\alpha^2\langle\Delta A\lambda,\lambda\rangle_{D(A)',D(A)}=-\nu\Vert \nabla \lambda\Vert^2-\nu\alpha^2\Vert A\lambda\Vert^2,\\
\alpha^2\langle\Delta(\hat{u}\cdot\nabla\lambda),\lambda\rangle_{D(A)',D(A)}&=&\alpha^2(\hat{u}\cdot\nabla\lambda,A\lambda)\leq \alpha^2\Vert \hat{u}\Vert_{L^6}\Vert \nabla \lambda\Vert_{L^3}\Vert A\lambda\Vert\nonumber\\
&\leq&\epsilon \Vert A\lambda\Vert^2+C_\epsilon\alpha^4\Vert \nabla \hat{u}\Vert^2\left[ \Vert \nabla \lambda\Vert\Vert A\lambda\Vert+\Vert \nabla \lambda\Vert^2\right]\nonumber\\
&\leq&2\epsilon \Vert A\lambda\Vert^2+C_\epsilon\alpha^8\Vert \nabla \hat{u}\Vert^4\Vert \nabla \lambda\Vert^2+C_\epsilon\alpha^4\Vert \nabla \hat{u}\Vert^2\Vert \nabla \lambda\Vert^2\label{ee3},\\
\alpha^2\langle\Delta(\lambda\cdot\nabla\hat{u}),\lambda\rangle_{D(A)',D(A)}&\leq &\alpha^2\Vert \lambda\Vert_{L^4}\Vert\nabla \hat{u}\Vert_{L^4}\Vert A\lambda\Vert\leq \epsilon \Vert A\lambda\Vert^2+C_\epsilon\alpha^4\Vert \lambda\Vert^2_{L^4}\Vert\nabla \hat{u}\Vert^2_{L^4}\nonumber\\
&\leq & \epsilon \Vert A\lambda\Vert^2+C_\epsilon\alpha^4\left [ \Vert \lambda\Vert^{1/2}\Vert \nabla\lambda\Vert^{3/2}+\Vert \lambda\Vert^2\right]\Vert\nabla \hat{u}\Vert^2_{L^4}\nonumber\\
&\leq & \epsilon \Vert A\lambda\Vert^2+C_\epsilon\alpha^4\left [ \Vert \lambda\Vert^{1/2}\Vert \nabla\lambda\Vert^{3/2}+\Vert \lambda\Vert^2\right]\Vert\nabla \hat{u}\Vert^2_{L^4}\nonumber\\
&\leq & \epsilon \Vert A\lambda\Vert^2+C_\epsilon\alpha^4\left [ \Vert \lambda\Vert^{2}\Vert \Vert\nabla \hat{u}\Vert^2_{L^4}+\Vert \nabla \lambda\Vert^2\Vert\nabla \hat{u}\Vert^2_{L^4}\right],\label{ee4}
\end{eqnarray}
\begin{eqnarray}
&&-\langle(\nabla\lambda)^*\Delta_\alpha\hat{u},\lambda\rangle_{D(A)',D(A)}+\alpha^2\langle\lambda\cdot\nabla(\Delta\hat{u}),\lambda\rangle_{D(A)',D(A)}\nonumber\\
&&=-\langle(\nabla\lambda)^*\hat{u},\lambda\rangle_{D(A)',D(A)}
+\alpha^2\langle(\nabla\lambda)^*\Delta\hat{u},\lambda+\alpha^2\langle\lambda\cdot\nabla(\Delta\hat{u}),\lambda\rangle_{D(A)',D(A)}\nonumber\\
&&=-\langle(\nabla\lambda)^*\hat{u},\lambda\rangle_{D(A)',D(A)},
\end{eqnarray}
\begin{eqnarray}\label{ee6}
-\langle(\nabla\lambda)^*\hat{u},\lambda\rangle_{D(A)',D(A)}&\leq & \Vert \nabla \lambda\Vert \Vert \hat{u}\Vert_{L^4}\Vert \lambda\Vert_{L^4}
\leq \Vert \nabla \lambda\Vert \Vert \hat{u}\Vert_{L^4}\left[ \Vert \lambda\Vert^{1/4}\Vert \nabla \lambda\Vert^{3/4}+\Vert \lambda\Vert^{2}\right]\nonumber\\
&\leq& \epsilon\Vert \nabla \lambda\Vert ^2+C_\epsilon\Vert \hat{u}\Vert^8_{L^4} \Vert \lambda\Vert^{2}+C_\epsilon\Vert \hat{u}\Vert^2_{L^4} \Vert \lambda\Vert^{2}.
\end{eqnarray}
Collecting the estimates (\ref{ee1})-(\ref{ee6}), and denoting by $\lambda^\alpha$ the solution $\lambda$ of (\ref{adj-12}) with parameter $\alpha,$ we can conclude the following uniform estimates with respect to parameter $\alpha:$
\begin{equation}\label{ee7}
\Vert \lambda^\alpha\Vert_{L^\infty(L^2)}+\alpha^2\Vert \nabla\lambda^\alpha\Vert_{L^\infty(L^2)}\leq C\ \mbox{ and }\ 
\Vert \nabla\lambda^\alpha\Vert_{L^2(L^2)}+\alpha^2\Vert A\lambda^\alpha\Vert_{L^2(L^2)}\leq C.
\end{equation}
Using (\ref{ee7}) and following the same argument used to get (\ref{bo-9}) we also obtain that
\begin{eqnarray}\label{ee9}
\Vert \lambda^\alpha_t\Vert_{L^2(L^2)}\leq C.
\end{eqnarray}
Previous estimates imply that there exists a subsequence $\{\lambda^{\alpha_j}\}_{\alpha_j>0}$ of $\{ \lambda^\alpha\}_{\alpha>0},$ and a corresponding function $\tilde\lambda$  such that:
\begin{eqnarray*}
\lambda^{\alpha_j}&\rightarrow& \tilde\lambda\ \mbox{weakly in}\ L^2(V)\ \mbox{as}\ \alpha_j\rightarrow 0^+,\\
\lambda_t^{\alpha_j}&\rightarrow& \tilde\lambda_t\ \mbox{weakly in}\ L^2(H)\ \mbox{as}\ \alpha_j\rightarrow 0^+,\\
A\lambda^{\alpha_j}&\rightarrow& A\tilde\lambda\ \mbox{weakly in}\ L^2(H)\ \mbox{as}\ \alpha_j\rightarrow 0^+.
\end{eqnarray*}
By virtue of the above convergences, it is straightforward to see that 
\begin{eqnarray*}
-\Delta_{\alpha_j}\lambda^{\alpha_j}_t+\nu\Delta_{\alpha_j} A\lambda^{\alpha_j}-\hat{u}\cdot\nabla\lambda^{\alpha_j}&\rightarrow&  -\tilde\lambda_t+\nu A\tilde\lambda \ \mbox{weakly in}\ L^2(D(A)^\prime)\ \mbox{as}\ \alpha_j\rightarrow 0^+,\\
\alpha_j ^2\Delta(\hat{u}\cdot\nabla\lambda_{\alpha_j} +\lambda_{\alpha_j} \cdot\nabla\hat{u})+\alpha_j^2\lambda_{\alpha_j}\cdot\nabla(\Delta\hat{u})
&\rightarrow& \hat{u}\cdot\nabla\tilde\lambda-(\nabla\tilde\lambda)^*\hat{u} \ \mbox{weakly in}\ L^2(D(A)^\prime)\ \mbox{as}\ \alpha_j\rightarrow 0^+.
\end{eqnarray*}
Consequently, we obtain, as $\alpha_j\rightarrow 0^+,$ the adjoint system associated to the optimal control problem for the Navier-Stokes model:
\begin{equation}\label{adj-12j}
\left\{
\begin{array}{rcl}
-\tilde\lambda_t+\nu A\tilde\lambda-\hat{u}\cdot\nabla\tilde\lambda
-(\nabla\tilde\lambda)^*\hat{u}&=&-\gamma_u\Vert \hat{u}-u_d\Vert^4_{L^4}\vert \hat{u}-u_d\vert(\hat{u}-u_d),\\
{\rm div}\,\tilde\lambda&=&0\ \mbox{ in }\ Q,\\
\tilde\lambda&=&0\ \mbox{ on }\ \Gamma\times(0,T),\\
\tilde\lambda(T)&=&\gamma_T(\hat{u}(T)-u_T)\ \mbox{ in }\ \Omega.
\end{array}
\right.
\end{equation}

{\bf Acknowledgments:} E. Mallea-Zepeda was supported by Proyecto UTA-Mayor 4743-19, Universidad de Tarapac\'a.
E.J. Villamizar-Roa has been supported by Vicerrectoría de Investigación y Extensión of Universidad Industrial de Santander, and Fondo
 Nacional de Financiamiento para la Ciencia, la Tecnología y la Innovación Francisco José de Caldas, contrato Colciencias FP 44842-157-2016.
E. Ortega-Torres was supported by Fondecyt-Chile, Grant 1080399.

\end{document}